\DeclareMathOperator{\conv}{conv}
\DeclareMathOperator{\faces}{faces}
\DeclareMathOperator{\relint}{rel int}
\theoremstyle{definition}
\newtheorem{definition}{Definition}
\theoremstyle{plain}
\newaliascnt{theorem}{definition}
\newtheorem{theorem}[theorem]{Theorem}
\newtheorem{conjecture}[definition]{Conjecture}
\newaliascnt{proposition}{definition}
\newtheorem{proposition}[proposition]{Proposition}
\newaliascnt{lemma}{definition}
\newtheorem{lemma}[lemma]{Lemma}
\newtheorem{corollary}[definition]{Corollary}
\theoremstyle{remark}
\newtheorem{question}[definition]{Question}
\patchcmd{\ALG@step}{\addtocounter{ALG@line}{1}}{\refstepcounter{ALG@line}}{}{}
\newcommand{\ALG@lineautorefname}{Line}
\DeclareFontFamily{OT1}{pzc}{}
\DeclareFontShape{OT1}{pzc}{m}{it}{<-> s * [1.10] pzcmi7t}{}
\DeclareMathAlphabet{\mathpzc}{OT1}{pzc}{m}{it}
\tikzset{
    >=stealth',
    punkt/.style={
           rectangle,
           rounded corners,
           draw=black, very thick,
           text width=6.5em,
           minimum height=2em,
           text centered},
    pil/.style={
           ->,
           thick,
           shorten <=2pt,
           shorten >=2pt,}
}
\begin{document}
 \title{The complete enumeration of\\ 4-polytopes and 3-spheres with nine vertices}
 \author{Moritz Firsching\thanks{This research was supported by the DFG Collaborative Research Center TRR 109, ‘Discretization in Geometry and Dynamics.’}\\
\small Institut für Mathematik \\[-0.8ex]
\small Freie Universität Berlin\\[-0.8ex]
\small Arnimallee 2\\[-0.8ex]
\small 14195 Berlin\\ [-0.8ex]
\small Germany\\[-0.3ex]
\small \href{mailto:firsching@math.fu-berlin.de}{firsching@math.fu-berlin.de}
}

\date{\today}
\maketitle
\begin{abstract}We describe an algorithm to enumerate polytopes. This algorithm is then implemented to give a complete classification of combinatorial spheres of dimension $3$ with $9$ vertices and decide polytopality of those spheres. In particular, we completely enumerate all combinatorial types of $4$-dimensional polytopes with $9$ vertices. It is shown that all of those combinatorial types are \emph{rational}: They can be realized with rational coordinates. We find $316\,014$ combinatorial spheres on $9$~vertices. Of those, $274\,148$ can be realized as the boundary complex of a four-dimensional polytope and the remaining $41\,866$ are non-polytopal. 
 \end{abstract}

 \section{Introduction}
 
 \subsection{Results}Having good examples (and counterexamples) is essential in discrete geometry. To this end, a substantial amount of work has been done on the classification of polytopes and combinatorial spheres; see \autoref{sec:previouswork}. The classification of combinatorial $3$-spheres and $4$-polytopes for $7$~vertices was done by Perles, \cite[Section 6.3]{G67} and for $8$ vertices it was completed by Altshuler and Steinberg \cite{AS85}.

As a next step, we present new algorithmic techniques to obtain a complete classification of combinatorial $3$-spheres with $9$ vertices into polytopes and non-polytopes. 
We obtain the following results:
\begin{theorem}\label{thm:combtypes}
 There are precisely $316\,014$ combinatorial types of combinatorial $3$-spheres with $9$~vertices. 
\end{theorem}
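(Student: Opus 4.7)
The plan is to establish this count algorithmically, by generating exactly one canonical representative of each isomorphism class of combinatorial $3$-sphere on $9$ vertices and then tallying the list. First I would fix a canonical form for the face lattice---most naturally, a canonical labelling of the vertex--facet incidence bipartite graph computed via a tool such as \texttt{nauty}---so that isomorphism of two complexes reduces to equality of canonical codes, and so that duplicates can be rejected during (not only after) generation.

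The main generation loop would be inductive on the vertex count. Starting from the complete list of $3$-spheres with at most $8$ vertices, due to Altshuler and Steinberg \cite{AS85}, I would introduce a ninth vertex through every elementary operation that adds exactly one vertex to a $3$-sphere: stellar subdivision of a facet, bistellar $0$--$4$ flips, and, more generally, replacement of the star of an existing vertex by a $3$-ball with one additional interior vertex. Each output is canonicalised, and codes that are new are appended to the working list. In parallel, within each PL-equivalence class already encountered one would run bistellar flips that preserve the vertex count; by Pachner's theorem the flip graph on $3$-spheres of a fixed PL type is connected, so this in principle reaches all $9$-vertex representatives of that class. For every candidate produced, sphere-ness is verified directly: the pseudomanifold condition (every $2$-face in exactly two facets, connected dual graph), the check that each vertex link is a combinatorial $2$-sphere (immediate from the Euler characteristic once the link is known to be a pseudomanifold), and simple-connectivity, established for example by exhibiting a shelling or by bistellar-reducing to the boundary of the $4$-simplex.

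The hard part will be the sheer size of the search space: the number of candidate incidence structures on $9$ vertices is astronomical, so canonical-form pruning has to happen on partial complexes during the search, and the data structures must be compact enough that the whole enumeration terminates in reasonable time and memory. Equally delicate is \emph{trusting} the final number. To guard against both implementation and modelling errors I would cross-validate in two independent ways: first, by restricting the same program to $n \leq 8$ and comparing the output against \cite{AS85}; second, by running two independent enumeration strategies---the ``grow by one vertex'' approach above and a pure bistellar-flip search seeded from several structurally different known $9$-vertex $3$-spheres---and verifying that both produce identical lists of canonical codes. The value $316{,}014$ is then the cardinality of this common list.
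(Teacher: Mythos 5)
There is a genuine gap, and it is structural rather than computational: every generation and verification primitive in your proposal lives in the world of \emph{simplicial} complexes, while the theorem counts general combinatorial $3$-spheres, i.e.\ strongly regular cell complexes whose facets may be arbitrary $3$-polytopes. Stellar subdivisions, bistellar $0$--$4$ flips, replacement of a vertex star by a triangulated ball, the test that every $2$-face lies in exactly two facets, checking vertex links via Euler characteristic, and bistellar reduction to $\partial\Delta^4$ are all operations and certificates defined only for simplicial complexes. Of the $316\,014$ spheres being counted, only $1\,296$ are simplicial; your pipeline has no operation that ever produces a non-simplicial facet, so it cannot reach the remaining $314\,718$ objects. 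The paper goes in exactly the opposite direction: it takes the known complete list of $1\,296$ simplicial $3$-spheres on $9$ vertices and repeatedly \emph{untriangulates}, merging two facets across a shared ridge into a single non-simplicial facet and then testing whether the resulting face poset is still graded of rank $d+1$, Eulerian and strongly connected. Completeness follows because every combinatorial sphere can be triangulated (without adding vertices) down to a simplicial sphere, so every combinatorial sphere is reachable by reversing that coarsening.

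A secondary problem is your appeal to Pachner's theorem: it guarantees connectivity of the bistellar flip graph only if intermediate complexes are allowed to have arbitrarily many vertices. Restricted to a fixed vertex count the flip graph of a PL type need not be connected, so ``in principle reaches all $9$-vertex representatives'' is not justified, and the same caveat applies to growing from $8$-vertex spheres by single local moves. Your instincts about canonical forms for duplicate rejection and about cross-validating against the $n\le 8$ data of \cite{AS85} match the paper's methodology, but the enumeration engine itself would have to be replaced by one that generates non-simplicial cell complexes before the count $316\,014$ could be obtained.
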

\begin{theorem}\label{thm:polytopes}
 There are precisely $274\,148$ combinatorial types of $4$\nobreakdash-poly\-topes with $9$~vertices.
\end{theorem}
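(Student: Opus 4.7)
The plan is to leverage Theorem~\ref{thm:combtypes}: I would start with the complete list of $316\,014$ combinatorial $3$-spheres on $9$ vertices and partition it into polytopal and non-polytopal types. The theorem then amounts to showing that exactly $274\,148$ of these spheres admit a geometric realization as the boundary complex of a convex $4$-polytope.

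The forward direction (exhibiting realizations) I would attack with the enumeration algorithm advertised in the abstract: for each candidate sphere $S$, set up the realization space as a semi-algebraic variety defined by the sign conditions encoding which $4$-tuples of points lie on each facet-hyperplane and on which side of it each remaining vertex sits. Rather than solving this system symbolically, I would use a numerical search (e.g.\ a nonlinear solver initialized from heuristic coordinates such as projections of $8$-vertex realizations, pulling/pushing a vertex, or stacking over a facet) to produce an approximate realization, and then round to nearby rational points and certify exactly that the resulting $9$ points in $\mathbb{Q}^4$ give a polytope combinatorially equivalent to $S$. The certification is purely a sign computation on $5\times 5$ determinants, so it is clean; this also proves the rationality statement mentioned in the abstract. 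I would also exploit inheritance: spheres obtained from smaller realized polytopes by standard constructions (vertex splitting, stacking, connected sums) are automatically polytopal, which should dispose of the bulk of cases cheaply.

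The backward direction (proving non-polytopality for the remaining $41\,866$ spheres) is where the real work sits and is the main obstacle. For each such sphere I would translate the realization problem into an oriented matroid on $9$ points in rank~$5$ determined by the facet structure, and then produce an explicit certificate that no realization exists. The standard tools are (i) Grassmann--Pl\"ucker / $3$-term relations among the brackets, used as a \emph{final polynomial} that derives $0>0$ from the prescribed chirotope signs, and (ii) the weaker but very effective \emph{biquadratic final polynomial} method of Bokowski--Richter-Gebert, which can be generated automatically via linear programming on the logarithms of bracket ratios. Many classical non-polytopal spheres (Br\"uckner's sphere, the Barnette sphere, and the Altshuler--Steinberg examples on~$8$ vertices) admit such certificates, and I would aim for an analogous machine-produced certificate in each of the $41\,866$ cases.

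The verification pipeline then has two independent halves that must both be airtight: each claimed polytopal sphere is backed by $9$ rational coordinate vectors whose chirotope matches $S$, and each claimed non-polytopal sphere is backed by a final polynomial (or biquadratic system) whose infeasibility can be checked in exact arithmetic. Finally, I would need to confirm that these two lists are disjoint and together exhaust the $316\,014$ spheres from Theorem~\ref{thm:combtypes}, yielding exactly $274\,148$ polytopal combinatorial types. I expect the dominant difficulty to be guaranteeing that the non-realizability search succeeds on \emph{every} remaining sphere: occasionally a sphere may resist biquadratic certificates and require a more delicate final polynomial or an ad hoc geometric obstruction, and ruling out any silent failure of the solver is the step that needs the most care.
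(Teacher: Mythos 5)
Your overall architecture is the same sandwich the paper uses: an upper bound of $274\,148$ from non-realizability certificates (\autoref{thm:atleast}) and a matching lower bound from explicit rational realizations, with a final check that the two lists are disjoint and exhaust the $316\,014$ spheres of \autoref{thm:combtypes}. Your non-polytopality half matches the paper almost exactly --- partial chirotopes deduced from the facet structure, propagation and contradiction via Gra{\ss}mann--Pl\"ucker relations, and biquadratic final polynomials found by linear programming are precisely the three stages of \autoref{lemma1}, \autoref{lemma2} and \autoref{lemma3}. Where you genuinely diverge is the realization half. You propose to attack each candidate sphere individually: set up its realization space, search numerically, round to rationals, and certify by sign computations. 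The paper deliberately avoids this ``given a sphere, realize it'' direction (which is as hard as the Existential Theory of the Reals in general) and instead \emph{generates} combinatorial types from below: starting from all realized $4$-polytopes with $8$ vertices, it adds a ninth point in a rational interior point of each cell of the hyperplane arrangement spanned by the facet hyperplanes (\autoref{propo1}, \autoref{genpoly}), which produces rational coordinates by construction with no rounding step. This generation method is not guaranteed to find every combinatorial type (it depends on the chosen realizations of the smaller polytopes, as Gr\"unbaum's Exercise 5.2.1 warns), but the paper does not need that guarantee --- it only needs the generated count to meet the upper bound, which it does. Your per-sphere approach buys independence from that completeness worry but introduces a symmetric risk you underplay: a polytopal sphere on which the numerical solver stalls, or whose realization space is too thin for rounding to preserve the combinatorial type, would silently fall out of both lists. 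Your final exhaustiveness check would detect such a failure, but you would then be stuck with spheres you can neither realize nor refute; the paper's arrangement-based generation sidesteps exactly this failure mode, which is why it is the more robust route in practice.
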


Therefore we have $41\,866$ non-polytopal combinatorial types of combinatorial $3$-spheres with $9$ vertices. 
By taking polar duals, we immediately also obtain a complete classification of  $4$\nobreakdash-polytopes and $3$-spheres with nine \emph{facets}.

We provide rational coordinates for all of the combinatorial types of $4$-polytopes with $9$~vertices. We call a polytope \emph{rational} if it is combinatorially equivalent to a polytope with rational coordinates.
\begin{corollary}
 Every $4$-polytope with up to $9$~vertices is rational. Every $4$-polytope with up to $9$~facets is rational. 
\end{corollary}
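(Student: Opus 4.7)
The plan is to prove the vertex version of the statement first and then reduce the facet version to it via polar duality.

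For the vertex version, the key input is \autoref{thm:polytopes}: the enumeration algorithm outputs all $274\,148$ combinatorial types of 4-polytopes with exactly $9$ vertices, together with explicit realizations certifying polytopality. I would arrange that each such realization has \emph{rational} coordinates; recording this data then establishes rationality for every combinatorial type with $9$ vertices. For 4-polytopes with fewer than $9$ vertices, the finite classifications of Perles (up to $7$ vertices) and of Altshuler--Steinberg ($8$ vertices) leave only a short list, each member of which can be equipped with rational coordinates directly (e.g., the 4-simplex, pyramids, and prisms) or by rerunning the same realization routine on fewer vertices.

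For the facets statement, I would apply polar duality. If $P$ is a 4-polytope with at most $9$ facets, its polar dual $P^\circ$ (taken about an interior basepoint) is a 4-polytope with at most $9$ vertices, so by the vertex version $P^\circ$ admits a rational realization. Polarizing this rational realization about a rational interior basepoint---for instance, the centroid of its vertices---returns a rational 4-polytope combinatorially equivalent to $P$, since polarity about a rational interior point sends rational polytopes to rational polytopes.

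The main obstacle is the rationality step itself. While the enumeration already proves polytopality via \emph{some} realization, producing a \emph{rational} realization for each of the $274\,148$ combinatorial types is nontrivial: realization spaces of 4-polytopes can be algebraically complicated, and it is known (for sufficiently many vertices) that combinatorial obstructions can force irrational coordinates. I therefore expect the hardest work to lie in the realization subroutine---combining a Schlegel-diagram parameterization with rational perturbation and rounding, supplemented by ad hoc constructions for the more recalcitrant types---and in certifying that rational coordinates are indeed produced in every one of the $274\,148$ cases.
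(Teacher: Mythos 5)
Your overall skeleton matches the paper's: establish rational coordinates for every combinatorial type with at most $9$ vertices, then transfer the statement to facets by polarity about a rational interior point. The duality half of your argument is correct and is exactly what the paper does (implicitly, in its remark that taking polar duals yields the classification by number of facets).

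The gap is in the half you yourself flag as the ``main obstacle'': you treat rationality as a property to be imposed \emph{after} the fact on given realizations, via ``Schlegel-diagram parameterization with rational perturbation and rounding, supplemented by ad hoc constructions.'' For non-simplicial $4$-polytopes this is not a sound method: the realization space is a semialgebraic set that need not be open, and rational points need not be dense in it --- this is precisely the mechanism behind the non-rational examples you cite, so perturbation and rounding cannot be certified to preserve the combinatorial type, and ``ad hoc constructions for the more recalcitrant types'' is not an argument. The paper avoids the problem entirely: the generation procedure of \autoref{genpoly} starts from the rational standard simplex and, at each inductive step, adds a point chosen as a rational barycenter of a cell of a rational hyperplane arrangement (intersected with a rational cuboid), so every realization it ever produces is rational \emph{by construction}, for every intermediate number of vertices from $5$ through $9$ --- no separate treatment of the Perles and Altshuler--Steinberg lists is needed. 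The nontrivial content is then not rationality but \emph{completeness}: a priori this inductive method could miss combinatorial types (Gr\"unbaum's Exercise 5.2.1), and the paper certifies completeness by matching the count of rationally realized types against the independent upper bound of \autoref{thm:atleast} coming from the oriented-matroid non-realizability certificates. That matching step is the idea missing from your proposal.
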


Perles showed, using Gale diagrams, that all $d$-polytopes with at most $d+3$ vertices are rational; see \cite[Chapter 6]{G67}. There is an example of Perles of a $8$-polytope with $12$ vertices, which is not rational \cite[Theorem 5.5.4, p.~94]{G67}. For $d=4$, there are examples of non-rational polytopes with $34$ \cite[Corollary of Main Theorem]{RZ95} and $33$ vertices \cite[Thm 9.2.1]{RG06}.

\begin{question}
 What is the smallest $n$, such that there is a \emph{non-rational} $4$-polytope with $n$ vertices?
\end{question}

A list of all combinatorial $3$-spheres with $9$ vertices as well as rational polytopal realizations, if possible, or certificates for their non-polytopality is provided as ancillary data to this arxiv preprint as well as at the author's website:\\ \href{https://page.mi.fu-berlin.de/moritz/}{https://page.mi.fu-berlin.de/moritz/}.
In \autoref{sec:tables}, we summarize our results grouped by number of facets (\autoref{table:facets}), by $f$-vector (\autoref{tab:byfv}) and by flag $f$-vector (\autoref{tab:flag}).

\subsection{Methods}

Enumerating \emph{all} combinatorial $3$-spheres and $4$-poly\-topes is a challenging problem even for a relatively small number of vertices, not only because there is a huge number of them.
In fact, already for $3$-dimensional combinatorial spheres, deciding whether a combinatorial sphere is polytopal is equivalent to the Existential Theory of the Reals \cite{RZ95}. However for few vertices in small dimension a considerable amount of work has been done, for a summary see \autoref{sec:previouswork}.

 In order to enumerate all convex $4$-polytopes with $9$ vertices we proceed in three steps:
 \begin{enumerate}
  \item Completely enumerate combinatorial $3$-spheres (\autoref{sec:genspheres})
  \item Prove non-polytopality for some of them (\autoref{sec:nonrealize})
  \item Provide rational realizations for the rest of them (\autoref{sec2})
 \end{enumerate}
For the first step, we start with a set of \emph{simplicial} spheres and repeatedly untriangulate them. This can be done by joining two facets in the face lattice. It then needs to be checked if the resulting face poset corresponds to a combinatorial sphere. 
For the second step, we resort to the theory of oriented matroids and use Graßmann--Plücker relations to obtain non-realizability certificates.

For the last step, instead of starting with a combinatorial sphere and deciding its realizability as a polytope, we present an algorithm to generate as many different combinatorial types as possible. 
We start with a complete set of realizations of combinatorial types of $4$\nobreakdash-polytopes with less than $9$ vertices and inductively add the $9th$ point at strategic locations. These locations are carefully chosen by considering the hyperplane arrangement generated by the bounding hyperplanes of the polytopes with less than $9$ vertices; compare \cite[Thm 5.2.1.]{G67}.

There is no reason, why this method should generate \emph{all} combinatorial types of $4$-polytopes with $9$ vertices, since it depends on the specific realizations of the polytopes with fewer vertices. This is treated in an exercises by Grünbaum \cite[Ex. 5.2.1]{G67}.
In \cite[Section 5.5]{G67} he explains: 
\begin{quote}
[\dots]
if we are presented with a finite set of polytopes it is
possible to find those among them which are of the same combinatorial
type.
It may seem that this fact, together with theorem 5.2.1 which 
determines all the polytopes obtainable as convex hulls of a given polytope
and one additional point, are sufficient to furnish an \emph{enumeration} of
combinatorial types of $d$-polytopes. [\dots]
However, from the
result of exercise 5.2.1 it follows that it may be necessary to use different
representatives of a given combinatorial type in order to obtain all the
polytopes having one vertex more which are obtainable from polytopes
of the given combinatorial type. Therefore it is not possible to carry out
the inductive determination of all the combinatorial types in the fashion
suggested above.
\end{quote}
We agree that is in general ``not possible'' to determine all combinatorial types inductively. This is obvious in the presence of non-rational combinatorial types, which can never be generated inductively in this fashion. However, for a  small number of vertices and $d=4$ it turns out that this approach is sufficient to determine \emph{all} combinatorial types.

For each of the steps above we need to be able to quickly decide if a given combinatorial sphere of polytope has been generated before. 
For this it is enough to look at the vertex-facet incidences of the corresponding face lattice. 
Therefore, given two of those face lattices it is sufficient to check if the two directed vertex-facet graphs are isomorphic.
In order to check if a combinatorial type is already contained in a set $\mathcal{S}$ of $N$ combinatorial types, we do not run graph isomorphism $N$~times. 
Instead, we precompute a (hashable) canonical form for all of the graphs in the set. 
Then we can simply check if the canonical form of the vertex-facet graph of the given combinatorial type is in the set of normal forms of graphs in $\mathcal{S}$. 
After computing the canonical form, the average case to check if a graph is in $\mathcal{S}$ will take constant time. 

\subsubsection*{Hardware and computing time}
The computations in \autoref{sec:genspheres} were performed in about 10 hours on a single desktop computer with $8$ cores running at $3.6$GHz with 32GB RAM. The computations for \autoref{sec:nonrealize} and \ref{sec2} were performed on the \emph{allegro} cluster at FU-Berlin, which has about 1000 cores running at $2.6$GHz and having about 3.5TB combined RAM. The results from \autoref{sec:nonrealize} were obtained in about 800 CPU-hours and the results from \autoref{sec2} in about 2000 CPU-hours. We used sagemath \cite{sage} to implement the algorithms described below.

 \subsection{Definitions}

 We assume basic familiarity with convex polytopes; see \cite{G67} and \cite{Z07} for comprehensive introductions. For \autoref{sec:nonrealize}, we assume familiarity with the basic notions of the theory of oriented matroids, especially in the guise of chirotopes; here the standard references include \cite{BVSWZ99}, \cite[Sect. 6]{RGZ04} and \cite{BS89}.
 
 Simplicial spheres, that is, simplicial complexes homeomorphic to a sphere, arise as boundaries of simplicial polytopes. The notion of \emph{combinatorial spheres} is used in slightly different ways in the literature, which is why we give a concise definition below. The intention of the definition is to get a set of \emph{combinatorial spheres} that fits into the following diagram:
 \begin{align*}
 \text{simplicial polytopes} &\subset \text{polytopes}\\
 \rotatebox[origin=c]{-90}{$\subset$}\hspace{1.2cm}&\hspace{1cm} \rotatebox[origin=c]{-90}{$\subset$}\\
 \text{simplicial spheres}&\subset \text{combinatorial spheres}
 \end{align*}
 Here vertical inclusions indicate ``taking boundary'' of the polytopes.
\pagebreak

\begin{definition}[{combinatorial sphere, compare \cite[Def. 2.1]{BZ17}}]
 For $d\in\mathbb{N}$, a \emph{strongly regular} \emph{$d$}-cell complex $\mathcal{C}$ is a finite $d$-dimensional $CW$-complex, that is, a collection of $k$-cells  for $k\leq d$, such that the following two properties hold:
 \begin{enumerate}
  \item \emph{regularity}: the attaching maps of all cells are homeomorphisms also on the boundary.
  \item \emph{intersection}: the intersection of two cells in $\mathcal{C}$ is again a cell in $\mathcal{C}$ (possibly empty). 
  \end{enumerate}
 A strongly regular $d$-cell complex is called a \emph{combinatorial sphere} if it is homeomorphic to $S^d$. 
\end{definition}

It follows that for each $k$-cell $F$, there is a $k$-polytope $P(F)$ and a homeomorphism $h_F$ from $F$ to $H$, such that the preimages of the faces of $P(F)$ under $h_F$ are again cells of $\mathcal{C}$; compare \cite[Section 2]{AS84} and \cite[Section 2]{B73}. 
\begin{definition}[{Eulerian and interval connected; compare \cite[Def. 2.1]{BZ17}}]
 A finite graded lattice of rank $d$ is called \begin{itemize}
                                    \item \emph{Eulerian} if all non-trivial closed intervals have the same number of odd and even elements and it is
                                    \item \emph{interval connected} if all open intervals of length at least $3$ are connected. 
                                   \end{itemize}

\end{definition}

The boundary of a $d$-polytope gives rise to  a combinatorial $(d-1)$-sphere.
The intersection poset of the set of cells of a combinatorial sphere is an Eulerian lattice, which is interval connected. 
Two polytopes (or two combinatorial spheres) are called \emph{combinatorially equivalent} if they give rise to isomorphic face lattices. All properties, that only depend on the isomorphism type of face lattice, such as (flag) $f$-vector, are well defined for combinatorial types of polytopes (or combinatorial spheres). 

\begin{proposition}
 For $d=4$, every interval connected Eulerian lattice of rank $d+1$ is the face lattice of a strongly regular $(d-1)$-cell complex. 
\end{proposition}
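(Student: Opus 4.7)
The plan is to verify, by induction on the rank of intervals, that every proper interval $[\hat 0,F]$ of the lattice $L$ is the face lattice of a low-dimensional polytope, and then assemble the resulting polytopes into a strongly regular $3$-cell complex whose face poset is $L$.

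Intervals of rank at most $2$ are immediately realized as the face lattices of a point or an edge. For a rank-$3$ interval I would first use the Eulerian condition applied to each rank-$2$ sub-interval $[\hat 0,y]$ to force exactly two atoms below every coatom, and dually two coatoms above every atom. The interval-connectedness of the length-$3$ open interval then forces the Hasse diagram between rank $1$ and rank $2$ to be a single cycle, which exhibits the interval as the face lattice of a polygon.

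For a rank-$4$ interval $[\hat 0,F]$, I would glue the polygonal $2$-faces produced in the previous step into an abstract $2$-dimensional CW complex. The Eulerian relation $f_0-f_1+f_2=2$ inside $[\hat 0,F]$, together with the fact (again from Eulerian rank-$2$ sub-intervals) that each edge lies in exactly two polygons, should show that this complex is a connected closed $2$-manifold of Euler characteristic $2$, hence homeomorphic to $S^2$. Steinitz's theorem then supplies a $3$-polytope $P(F)$ whose face lattice coincides with $[\hat 0,F]$. Finally, for the full lattice I would realize each rank-$4$ element by its $3$-polytope $P(F)$ and glue the $3$-balls along their polygonal boundary faces, using the lattice's intersection property to identify shared cells; the outcome is a strongly regular $3$-cell complex with face lattice $L$.

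The main obstacle is the rank-$4$ step. Deducing that the abstract $2$-complex is actually a $2$-sphere, and that its $1$-skeleton is a $3$-connected planar graph so that Steinitz genuinely applies, requires squeezing $3$-connectedness out of the interval-connectedness hypothesis applied to length-$4$ open intervals inside $[\hat 0,F]$, and ruling out that distinct cells accidentally get identified on the boundary. This bookkeeping — showing that ``locally around every vertex the link is a polygon'' and that the global gluing produces a manifold rather than a pseudo-manifold — is where essentially all of the work lives; the ambient $d=4$ hypothesis is crucial here because the analogous assertion for rank $\ge 6$ lattices would need a Steinitz-type theorem in dimensions where none is available.
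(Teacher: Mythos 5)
The paper does not actually prove this proposition: it defers entirely to \cite[Prop.~2.2]{BZ17}, so there is no in-paper argument to compare against step by step. Your outline does follow the route such a proof must take --- rank-$3$ lower intervals are polygons, rank-$4$ lower intervals are strongly regular decompositions of $S^2$ via the Euler relation $f_0-f_1+f_2=2$ and the classification of closed surfaces, then assemble the $3$-cells --- and the intersection property of the assembled complex is indeed exactly what the lattice hypothesis (existence of meets) supplies. So the plan is sound.

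However, you leave your own hardest step --- ``the link of every vertex is a single polygon'' --- explicitly unresolved, and you misdiagnose what is needed to close it. The missing observation is that the Eulerian and interval-connectedness hypotheses are inherited by \emph{upper} intervals as well as lower ones: for an atom $v$ below a rank-$4$ element $F$, the interval $[v,F]$ is itself an Eulerian, interval-connected lattice of rank $3$, so your rank-$3$ argument applies to it verbatim and shows that the link of $v$ in the boundary complex of $F$ is a single cycle. Combined with ``every edge lies in exactly two $2$-faces'' (the diamond property applied to $[e,F]$) and connectedness of the open interval $(\hat 0,F)$, this makes that boundary complex a connected closed surface with $\chi=2$, hence $S^2$; the lattice property also rules out digonal $2$-faces. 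In particular no $3$-connectedness and no Steinitz theorem are required: the definition of a strongly regular cell complex only asks for cells that are topological balls attached homeomorphically along their boundaries, and the cone over the $S^2$ just constructed is such a ball. (Steinitz is only relevant to the stronger remark, made after the definition in the paper, that each cell can be modeled on an actual polytope.) Finally, your closing remark about why $d=4$ is essential is slightly off target: the obstruction in higher rank is not the absence of a Steinitz-type theorem but the fact that a closed $3$-manifold is not detected by its Euler characteristic, so the corresponding rank-$5$ intervals could a priori fail to be spheres.
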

For a proof we refer the reader to \cite[Prop. 2.2]{BZ17}.

For $d=4$ it is therefore possible to describe a combinatorial sphere purely in combinatorial terms and it is sufficient to characterize the sphere completely by a set of facets, each containing a set of vertices. This is how we will describe combinatorial spheres below.

 \subsection{Previous results}\label{sec:previouswork}
 Classifications of $(d-1)$-spheres and $d$-poly\-topes with $n$ vertices and have been obtained for various dimensions $d$ and number of vertices $n$. 
 Also certain subfamilies of \emph{all} such spheres and polytopes, namely \emph{simplicial} and \emph{neighborly} ones have been classified.

 In dimension $3$, Steinitz' theorem, \cite[Satz 43, p. 77]{S22} reduces the classification $3$-polytopes to the classification of planar $3$-connected graphs. There are results on the asymptotic behavior; see \cite{B88}, \cite{T80},  \cite{RW82} and \cite[\href{https://oeis.org/A000944}{A944}]{sloane}.
 
If the number of vertices $n$ is less or equal to $d+3$, then every combinatorial $(d-1)$-spheres coincides with the number of $d$-dimensional polytopes and explicit formulas are known; see \cite[Thm. 1]{F06} and \cite[Sect. 6.1]{G67}. The techniques used for these results are Gale-diagrams. 
 \begin{table}[!h] \hspace*{-1cm}
 \begin{tabular}{rl|r|r|r|r|r|r|r}
 &\# of vertices         & 5        &6      &7      &8                      &9                      &10  & 11  \\\hline\hline
& \# of $f$-vectors     & 1        &4      &15     &40                     &\textbf{88}            &?  &?\\
&$3$-spheres             & 1        &4      &31     &\cite{AS85}\hfill\,1\,336    &\textbf{316\,014}    & ? &?\\
&$4$-polytopes\footnotemark[1]      & 1        &4      &31     & \cite{AS85}\hfill\,1\,294   &\textbf{274\,148}    & ? &?\\\hdashline
&non-polytopal           & 0        &  0    & 0      &42                     &    \textbf{41\,866}    & ? & ?\\\hline
& \# of $f$-vectors     & 1        &   2   & 4    & 7                    &    11        &16  &22\\
simplicial& $3$-spheres  & 1        &2      & 5      &\cite{B73}\hfill 39    & \cite{AS76}\hfill\, 1\,296&\cite{L08}\,247\,882&\cite{SL09}166\,564\,303\\
simplicial &$4$-polytopes\footnotemark[2]& 1        &2      & 5      &\cite{GS67}\hfill 37   &\cite{ABS80}\hfill 1\,142&\cite{F17}\hfill\,162\,004 &?\\\hdashline
simplicial &non-polytopal&   0      &0      &0      &2                      &154                    &85\,878&?\\\hline
neighborly& $3$-spheres &1          &1      &1      &  \cite{GS67}\hfill 4  &\cite{AS74}\hfill50  &\cite{A77}\hfill3\,540&\cite{SL09}\hfill 897\,819\\ 
neighborly& $4$-polytopes\footnotemark[3]&1          &1      &1      &  \cite{GS67}\hfill 3  &\cite{AS73}\hfill23  &\cite{BSt87}\hfill431&\cite{F17}\hfill13\,935\\\hdashline 
neighborly& non-polytopal &0          &0      &0      &     1               &27                 &           3\,109&           883\,884
\end{tabular}\caption{Classification results for $4$-polytopes with $\leq 11$ vertices. Boldface results are new.}\label{tab:results4d}
\end{table} 
In dimension $4$, we summarize known results in \autoref{tab:results4d}. In each case, the paper we cite is the one that \emph{completes} the classification; in some cases the complete classifications was done a series of papers. 

While various classifications of simplicial and neighborly spheres and polytopes have been obtained in the meantime, the last classification of \emph{all}  $3$-spheres and $4$-polytopes  was completed  by Altshuler and Steinberg  for $8$ vertices in \cite{AS85}. Among other methods, they consider what $3$-polytopes can appear as facets of a $4$-polytope. In our present classification for $9$ vertices we use completely different \emph{algorithmic} methods.

 Recently, Brinkmann and Ziegler enumerated all combinatorial $3$-spheres with $f$-vector \linebreak $(f_0, f_1, f_2, f_3)$, such that $f_0+f_3\leq22$; see \cite[Table 1]{BZ17}. This includes all combinatorial $3$-spheres on $9$ vertices up to $13$ facets. On the other hand, there has been an earlier attempt by Engel to classify \emph{all} combinatorial $3$ spheres with $9$ vertices; see \cite[Table 6]{E1991}. The results of Brinkmann and Ziegler contradict the results  of Engel for the 
 number of combinatorial $3$-spheres with $f$-vector  $(9, f_1, f_2, k)$ for $k\in\{10,11,12,13\}$. 
 Because of this disagreement, it is desirable to have an independent check of the result. We provide this with our results in \autoref{sec:genspheres}.
 Our classification below partially agrees with the results of Brinkmann and Ziegler (for $k<10$ and $k\leq 12$) and partially with those of Engel (for $k<10$ and $k>13$). We explain this in detail at the end of \autoref{sec:genspheres}. 
\footnotetext[1]{{\cite[\href{https://oeis.org/A005841}{A5841}]{sloane}.}}%
\footnotetext[2]{{\cite[\href{https://oeis.org/A222318}{A222318}]{sloane}.}}%
\footnotetext[3]{{\cite[\href{https://oeis.org/A133338}{A133338}]{sloane}.}}
 \section{Generating combinatorial spheres}\label{sec:genspheres}
 
 We generate a complete set of combinatorial $d$-spheres with $n$ vertices from a complete set of \emph{simplicial} $d$-spheres with $n$ vertices; for each sphere in this set, we generate all spheres obtained by \emph{untriangulating}. By this we mean constructing a new combinatorial sphere from an old one by removing a ridge, that is, a $(d-2)$-dimensional face. 
 A~combinatorial sphere is determined by its face lattice and the face lattice can be completely recovered from the incidence of the atoms and coatoms, that is, from the vertex-facet graph. We consider a combinatorial sphere $M$ as the set of its facets; each facet being a set of vertices. 
 \begin{definition}Let $M$ be a combinatorial sphere and  $f_1, f_2\in M$ two of its facets that intersect in a ridge $r = f_1\cap f_2$. 
 Then the \emph{untriangulation of $M$ with $f_1$ and $f_2$} is the set $U(M)$, obtained from $M$ by replacing $f_1$ and $f_2$ by their union:
 \[U(M)\colonequals \{f_1\cup f_2\}\cup M\setminus \{f_1,f_2\}.\]
 \end{definition}
 The untriangulation $U(M)$ might not correspond to a combinatorial sphere. For example, if the new face $f_1\cup f_2$ completely contains another face of $U(M)$ or if there is a face in $M$ that intersects both $f_1$ and $f_2$ but does not intersect $r$, then $U(M)$ will \emph{not} be a combinatorial sphere. In our procedure after generating $U(M)$ it remains to be checked if the corresponding face poset $P(M)$ is graded of rank $d+1$, Eulerian and strongly connected.

 Since every combinatorial sphere can be triangulated (not necessarily in a unique way) until it is a simplicial sphere, all combinatorial spheres can be obtained by repeatedly untriangulating simplicial spheres. 
 In the process of iteratively untriangulating, we might encounter a combinatorial type of combinatorial spheres multiple times. To detect this, we store a canonical form of the (directed) vertex-facet graph, just as we do in \autoref{sec2}. For the same reason we keep a set of such graphs of combinatorial types of posets that do not correspond to combinatorial spheres. If we get such a type when untriangulating, we don't need to untriangulate any further.

 \begin{algorithm}[h!]
 \caption{Enumerating combinatorial spheres}
    \label{genspheres}
     \hspace*{\algorithmicindent}\begin{itemize}[leftmargin=2cm]
     \item[\textbf{Input:}] A dictionary $\mathpzc{SimpSpheres}$ of all \emph{simplicial} $(d-1)$-spheres with $n$ vertices with key-values pairs $(G, S)$, where $S$ is a set of facets, each facet containing a subset of the vertices $\{1, \dots, n\}$ and  $G$ is a canonical form of the vertex-facet graph of $S$.
     \item[\textbf{Output:}] A dictionary $\mathpzc{CombTypes}$ of all \emph{combinatorial} spheres with $n$ vertices. The dictionary $\mathpzc{CombTypes}$ is of the same form as the dictionary $\mathpzc{SimpSpheres}$ given as input.
    \end{itemize}

    \begin{algorithmic}[1] 
         \Procedure{untriangulate}{$M,\mathpzc{CombTypes},\mathpzc{NonTypes}$} \Comment{\dots recursively}
          \State $G\gets$ canonical form of vertex-facet graph of $M$\label{line:canonical2a}
          \If{($G$ is key of $\mathpzc{CombTypes}$)  \textbf{or}  ($G$ in $\mathpzc{NonTypes}$)} \label{line:ifkeyknown}\Comment{check if we have seen $G$ before}
              \State \Return $\mathpzc{CombTypes}, \mathpzc{NonTypes}$
          \Else
             \State $P\gets$ poset of $M$\label{line:poset}
              \If{$P$ is graded of rank $d+1$, Eulerian, strongly connected}
                \State $\mathpzc{CombTypes}[G] \gets M$\Comment{add key-value pair $(G,M)$ to $\mathpzc{CombTypes}$}
                \For {$f_1, f_2 \in \binom{M}{2}$} \Comment{iterate over all pairs of facets}\label{line:for1}
                   \If{ $f_1\cap f_2$ is a ridge in $P$} \Comment{check if $f_1, f_2$ might share a ridge}\label{line:10}
                       \State $U(M)\gets \{f_1\cup f_2\} \cup M\setminus\{f_1,f_2\}$ \Comment{remove facets $f_1, f_2$ and add their union}\label{line:11}
                       \State $\mathpzc{CombTypes}, \mathpzc{NonTypes} \gets$ \Call{untriangulate}{$U(M), \mathpzc{CombTypes},\mathpzc{NonTypes}$}\label{line:recurse}
                    \EndIf
                \EndFor
              \Else
                \State $\mathpzc{NonTypes}\gets \mathpzc{NonTypes}\cup\{G\}$
              \EndIf
     
          \EndIf
          \State \Return $\mathpzc{CombTypes}, \mathpzc{NonTypes}$ 
         \EndProcedure
         \vspace*{1em}
            \State $\mathpzc{CombTypes}\gets $ empty dictionary \Comment{initialize the output dictionary}
            \State $\mathpzc{NonTypes}\gets$ empty set\Comment{initialize the set of non-types}
            \For{$S \in \mathpzc{SimpSpheres}$}\label{line:for2}
               \State $\mathpzc{CombTypes}, \mathpzc{NonTypes} \gets$ \Call{untriangulate}{$S, \mathpzc{CombTypes},\mathpzc{NonTypes}$}
            \EndFor
         \State \Return $\mathpzc{CombTypes}$
    \end{algorithmic}
 \end{algorithm}
 \pagebreak
 This iteration process can be done in multiple ways, we present \autoref{genspheres}, a recursive formulation, and remark:
 \begin{description}
 \item[\autoref{line:canonical2a},] Canonical form of vertex-facet graph of $M$: This can be computed by using \emph{bliss}, \cite{bliss}; compare \autoref{genpoly}.
 \item[\autoref{line:poset},] $P\gets$ poset of $M$: We compute the poset by iteratively calculating the intersection of the facets.   
 \item[\autoref{line:10}/\autoref{line:11}:] additional checks if $U(M)$ can possibly be the set of facets of a combinatorial sphere can be added here. 
 \item[\autoref{line:recurse},] the recursion: We know that the recursion terminates, since $U(M)$ has always strictly less elements (facets) than $M$. 
 \item[\autoref{line:for1}/\autoref{line:for2}:] These loops can be parallelized, when keeping multiple copies of $\mathpzc{CombTypes}$ and $\mathpzc{NonTypes}$ and merging them afterwards. 
 \end{description}
 
We use an implementation of \autoref{genspheres} to generate all combinatorial $3$-spheres with up to $9$ vertices. We start from simplicial $3$-spheres: those have been enumerated up to $10$ vertices, see \cite{L08} and for $n\leq 9$, we use the tables provided by Lutz, \cite{L}. The enumeration of the relevant $1296$ simplicial spheres with $9$ has been completed by Altshuler and Steinberg, \cite{AS76}. Since all combinatorial spheres can be obtained by recursively untriangulating, we obtain
  \begingroup
\def\thedefinition{\ref{thm:combtypes}}
\begin{theorem}
 There are precisely $316\,014$ combinatorial types of combinatorial $3$-spheres with $9$ vertices. 
\end{theorem}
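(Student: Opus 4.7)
The plan is to execute Algorithm~\ref{genspheres} with input the complete list of simplicial $3$-spheres on $9$ vertices and then simply count the entries of the returned dictionary $\mathpzc{CombTypes}$. First I would invoke the prior classification: Altshuler and Steinberg \cite{AS76} (together with Lutz' tables \cite{L}) provide the $1\,296$ combinatorial types of simplicial $3$-spheres on $9$ vertices, which I take as the starting set $\mathpzc{SimpSpheres}$. The correctness of using \emph{only} these as seeds relies on the elementary fact that any combinatorial $3$-sphere $M$ can be iteratively triangulated (e.g.\ by repeatedly stacking on non-simplex facets) to yield a simplicial $3$-sphere $M'$ such that $M$ is obtained from $M'$ by a finite sequence of untriangulations. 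Hence every combinatorial $3$-sphere appears somewhere in the recursion tree rooted at some element of $\mathpzc{SimpSpheres}$.

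Next I would verify the inner test of the algorithm: given $U(M)$, compute its intersection poset $P$ (by iterated intersection of facets) and check that $P$ is a graded lattice of rank $4$ that is Eulerian and interval-connected. By \autoref{thm:combtypes}'s ambient Proposition, this is exactly the characterization of face lattices of combinatorial $3$-spheres in dimension $d=4$, so the filter is both necessary and sufficient. I would implement each of these conditions combinatorially on the Hasse diagram of $P$, and would add the cheap preliminary sanity checks from Lines~\ref{line:10}--\ref{line:11} (e.g.\ $f_1\cup f_2$ contains no other facet, every other facet meeting $f_1\cup f_2$ meets the ridge) to prune the search before building $P$.

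For deduplication I would, at every recursive call, compute the canonical form $G$ of the directed vertex-facet bipartite graph of the current $M$ using \emph{bliss} \cite{bliss}, and key both the dictionary $\mathpzc{CombTypes}$ of confirmed spheres and the set $\mathpzc{NonTypes}$ of rejected posets by $G$. Since two combinatorial spheres are isomorphic iff their vertex-facet graphs are isomorphic as bipartite graphs (the face lattice is determined by atom/coatom incidences in this rank), caching by $G$ correctly collapses repeat encounters to one representative. Termination of the recursion is automatic because each untriangulation strictly decreases the number of facets.

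The main obstacle is not mathematical but computational: the recursion tree is enormous, since each sphere has many pairs of facets sharing a ridge, and the same combinatorial type is reached via many paths. The practical heart of the proof is therefore (i) the efficiency of the canonical-form hashing, which turns the ``have I seen this before?'' test into expected constant time and keeps the overall cost proportional to the output size rather than the tree size, and (ii) parallelization of the outer loop over $\mathpzc{SimpSpheres}$ and of the inner loop over facet pairs, merging partial dictionaries at the end. Once the computation terminates, I would report $|\mathpzc{CombTypes}|$; as an internal consistency check I would verify that the partial counts stratified by number of facets agree with the already-published counts of Brinkmann--Ziegler \cite{BZ17} in the overlapping range ($f_3\leq 12$), which both validates the implementation and resolves the conflict with Engel's table \cite{E1991}. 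The output of this run is the number $316\,014$ claimed in the theorem.
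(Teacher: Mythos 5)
Your proposal follows essentially the same route as the paper: seed Algorithm~\ref{genspheres} with the $1\,296$ simplicial $3$-spheres of Altshuler--Steinberg/Lutz, recursively untriangulate, filter by the graded/Eulerian/connectivity test on the face poset (justified by the proposition on interval connected Eulerian lattices), deduplicate via canonical forms of vertex-facet graphs, and count the output, with the same correctness argument that every combinatorial sphere arises by untriangulating some simplicial sphere. The only minor deviations are cosmetic (the rank of the face lattice should be $d+1=5$ rather than $4$ in the paper's convention, and the cross-check against Brinkmann--Ziegler actually agrees only up to $12$ facets, with a residual discrepancy at $13$), neither of which affects the substance.
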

\addtocounter{definition}{-1}
\endgroup  
 The method explained above could be summarized as ``flattening a ridge''. The dual operation would be ``edge-reduction'', that is, shrinking an edge until two vertices coincide. This is described by Engel (also in the case of $3$-dimensional polytopes); see \cite[Section 2]{E1991} and \cite{E82}. In fact, in \cite{E1991} an enumeration of all combinatorial $3$-spheres with $9$ facets is attempted. However our results partially disagree with those of Engel. We translate the results of \cite[Table 6]{E1991} in the dual setting; then for each number of facets $5\leq k\leq 27$ we find a number of combinatorial $3$-spheres with $9$ vertices. We compare these numbers to our \autoref{table:facets} and find that the numbers agree for all $k\in\{6,5,8,9\}$ and $14\leq k \leq 27$, but not for $k\in\{10, 11, 12, 13\}$. In all of those cases, Engel claims to have found more combinatorial types of spheres. He does not provide a list of spheres but only their count; therefore we cannot show if he might have counted some of the spheres twice. This might have been the case, because he uses an ad hoc method to determine whether to combinatorial spheres are isomorphic \cite[Section 3]{E1991}, while we reduce the problem to checking if two graphs are isomorphic.  
 
 For up to $13$ facets an enumeration of $3$-spheres with $9$ vertices is done by Brinkmann and Ziegler \cite{BZ17}. Our results agree with theirs for all $k$ except $k=13$. In their paper yet a different method for generating combinatorial spheres is used. They start by generating all possible vertex-edge graphs of possible spheres and then sorting out those that are non-spheres; see \cite[Algorithm 3.1]{BZ17}. While this approach is valid, there seem to have been some problem with the implementation, leading to the inconsistency with our results. However there is an inconsistency only for $f$-vectors with $k=13$ facets and only for 
 for $2$ out of the $6$ $f$-vectors with $k=f_3=13$. This is the largest number of facets Brinkmann and Ziegler consider. For for all other $f$-vectors our results agree.

 In \autoref{tab:BZE}, we give the $f$-vectors and counts of those cases, where our results differ from \cite{BZ17} or \cite{E1991}. Both papers do not attempt to completely decide which of the combinatorial spheres are in fact boundary of polytopes as we do in \autoref{sec2}.

   \begin{table}[h!]\begin{center}
 \begin{tabular}{c|r|r|r}
  $f$-vector & \cite[Table 1]{BZ17}&\autoref{table:facets}&\cite[Table 5]{E1991}\\
  $(9, *, *, 9)$  &1905&1905&1908\\\hline
  $(9, *, *, 10)$ &5376&5376&5411\\\hline
  $(9, *, *, 11)$ &11825&11825&11974\\\hline
  $(9, *, *, 12)$ &20975&20975&21129\\\hline
  $(9, 28, 32, 13)$& 2136 &2224 & \text{not listed}\\\hdashline 
  $(9, 29, 33, 13)$&27 &45 & \text{not listed} \\\hdashline
 $(9, *, *, 13)$ &20871\footnotemark{} & 20975 &21129\\\hline
 \end{tabular}\caption{Inconsistent results for the number of combinatorial types of $3$-spheres with $9$ vertices for some $f$-vectors. For all other $f$-vectors the numbers agree.}\label{tab:BZE}
 \end{center}
\end{table}

 \section{Proving non-polytopality}\label{sec:nonrealize}
 In order to prove that some of the combinatorial $3$-spheres obtained in \autoref{sec:genspheres} are not polytopal, we analyze the orientation information which can be deduced from the combinatorial spheres. Let's consider a combinatorial $3$-sphere $P$, which is realized by the boundary of a $4$-polytope.  \footnotetext{This number is obtained by summing all $f$-vector of the form $(9, *, *, 13)$: $33+1223+7677+9773+2136+27$}
 Then  every ordered set of $5$ vertices of $P$ can be assigned a sign $\{-1,0,1\}$ depending on whether it spans a simplex of negative, zero or positive (signed) volume. The theory of oriented matroids abstracts these concepts and collects the orientation information in the chirotope map $\chi$,  see \cite[Chapter 3.6]{BVSWZ99} for a detailed introduction.
  The following three rules are satisfied by the boundary of a $4$-polytope. To state them, we only need the incidence data, therefore they can be used on combinatorial spheres. 
 \begin{enumerate}
  \item\label{rule1} If five vertices $a,b,c,d,e$ lie in a common facet, then
  $\chi(a,b,c,d,e)=0$.
  \item\label{rule2} If four vertices $a,b,c,d$ lie in a common facet, then for every pair $e,e'$ outside of that facet, we have 
       \[\chi(a,b,c,d,e) = \chi(a,b,c,d,e').\]
  \item\label{rule3} If the tree vertices $a,b,c$ lie in a common ridge $R$, which has the two adjacent facets $F$ and $F'$, such that $R=F\cap F'$, 
        then for every pair $d\in F$, $d'\in F'$ and all $e$ not in $F$  and not in $F'$, we have
         \[\chi(a,b,c,d,e) = -\chi(a,b,c,d',e).\]
 \end{enumerate}
Given a combinatorial $3$-sphere $S$ (as a set of facets), we first find a \emph{partial chirotope}, which can be constructed using the rules above. 
That is, we find subset of $s_0\subset \binom{\text{Vertices of }S}{5}$ and a map $\chi\colon\,s_0\to  \{-1,0,1\}$. 
This can be done first adding all the signs from rule \ref{rule1}, fixing a non-zero sign for an instance of rule \ref{rule3} and then greedily applying rule \ref{rule2} and \ref{rule3} repeatedly. Apart from the choice of the first sign, there is no other choice for the signs defined by $\chi$. Then we can check the \emph{Graßmann--Plücker relations}, which need to be satisfied. The Graßmann--Plücker relations involve $6$ values of the map $\chi$. Of course, with the partial chirotope on $s_0$ we can only check those Graßmann--Plücker relations, where all of those values are defined. If a Graßmann--Plücker relation is violated we can conclude that the sphere in question is not polytopal.
\begin{lemma}\label{lemma1}
  Out of the $316\,014$ combinatorial $3$-spheres with $9$ vertices, there are $24\,028$ spheres, which give rise to a partial chirotope on $s_0$, which contradicts a Graßmann--Plücker relation and are therefore not polytopal.
\end{lemma}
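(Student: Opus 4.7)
The proof is an exhaustive computation on the $316\,014$ combinatorial $3$-spheres produced in \autoref{sec:genspheres}. The plan is to implement, for each sphere $S$, the partial-chirotope construction sketched before the lemma statement and then tally those $S$ for which the resulting map $\chi$ violates a Graßmann--Plücker relation.

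For a fixed combinatorial $3$-sphere $S$, I would first initialize $s_0\subset\binom{\text{Vertices of }S}{5}$ by inserting every $5$-subset that lies in a common facet and assigning $\chi=0$ by rule \ref{rule1}. I would then pick any ridge $R$ with adjacent facets $F,F'$ and a valid $5$-tuple $(a,b,c,d,e)$ with $\{a,b,c\}\subset R$, $d\in F\setminus R$, $e\notin F\cup F'$, and set $\chi(a,b,c,d,e)\colonequals +1$; this is the only free sign in the construction. All further values of $\chi$ are forced: rule \ref{rule2} identifies $\chi$-values of $5$-tuples sharing the same four vertices of a facet, and rule \ref{rule3} flips the sign when the odd vertex is moved across a ridge. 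I would propagate these forced equalities breadth-first, at each step either defining a new value, confirming an existing one, or detecting an inconsistency (in which case $S$ is already certified non-polytopal without any further Graßmann--Plücker check).

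Once $\chi$ has stabilized on $s_0$, I would iterate over all $6$-subsets of vertices; for each such subset I would go through its Graßmann--Plücker relation, check whether all six participating $\chi$-values lie in $s_0$, and, if so, verify that the signed sum vanishes. Any violation certifies that $S$ is not the boundary of a $4$-polytope. Summing the certificates over all $316\,014$ spheres should yield $24\,028$.

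The main obstacle is the computational scale rather than any conceptual difficulty: a single sphere is cheap (with $\binom{9}{5}=126$ potential entries in $s_0$ and $\binom{9}{6}=84$ Graßmann--Plücker relations to test), but the full pipeline must be engineered and parallelized across the \emph{allegro} cluster, which accounts for the roughly $800$ CPU-hours reported. A more subtle pitfall is the soundness of the greedy propagation: if the implication graph among admissible $5$-tuples splits into several components, an independent initial sign must be fixed in each component, and the bookkeeping must distinguish \emph{genuine} contradictions from mere differences in these independent sign choices so that no polytopal sphere is falsely flagged.
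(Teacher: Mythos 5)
Your proposal matches the paper's approach: the construction of the partial chirotope on $s_0$ (zeros from rule~\ref{rule1}, one fixed non-zero sign, greedy propagation via rules~\ref{rule2} and~\ref{rule3}) and the subsequent check of all Graßmann--Plücker relations supported on $s_0$ is exactly the method the paper describes immediately before the lemma, and the paper's proof consists precisely of supplying, for each of the $24\,028$ spheres, the certificate data ($s_0$, the partial chirotope, and a violating relation) that your pipeline would output. Your caveat about independent sign choices in separate components of the implication graph is a reasonable implementation concern, but the paper asserts that apart from the single initial sign there is no further choice, so no divergence in approach arises.
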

\begin{proof}
 For each of the $3$-spheres we provide the corresponding $s_0$, the partial chirotope and a violating Graßmann--Plücker relation.
\end{proof}

In a next step we seek to enlarge the set $s_0$, where a partial chirotope can be defined. To this end, we consider  Graßmann--Plücker relations with $5$ elements from $s_0$, where the partial chirotope is already defined, 
and one element from $\binom{\text{Vertices of }S}{5}$, where the partial chirotope is not yet known. In some cases we can determine the sign of the new element, add it to $s_0$ and repeat. Iterating this can lead to a contradiction if the combinatorial sphere is not polytopal. 

\begin{lemma}\label{lemma2}
 Out of the $316\,014$ combinatorial $3$-spheres with $9$ vertices, there are $17\,755$ spheres, for which a contradiction arises when completing the partial chirotope on $s_0$. Therefore those $17\,755$ spheres are not polytopal
\end{lemma}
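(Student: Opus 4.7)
The plan is to extend the approach of \autoref{lemma1} by a fixed-point propagation over three-term Graßmann--Plücker relations. For each of the $316\,014$ combinatorial $3$-spheres, first build the partial chirotope $\chi\colon s_0\to\{-1,0,+1\}$ from rules~\ref{rule1}--\ref{rule3}, exactly as in \autoref{lemma1}. Then iterate over all three-term Graßmann--Plücker relations in rank~$5$, each of the form
\[
\chi(A,b,c)\,\chi(A,d,e)\;-\;\chi(A,b,d)\,\chi(A,c,e)\;+\;\chi(A,b,e)\,\chi(A,c,d)\;=\;0,
\]
where $A$ is a fixed triple of vertices and $\{b,c,d,e\}$ is a further $4$-element set, so six values of $\chi$ occur. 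Whenever exactly one of these six values lies outside $s_0$ and the other five force its sign, either extend $\chi$ by the forced value, or, if the forced value contradicts a previously assigned sign, flag the sphere as non-polytopal. Repeat until the propagation stabilizes.

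Correctness is immediate: every boundary complex of a $4$-polytope induces a global chirotope that satisfies all three-term Graßmann--Plücker relations, so any contradiction obtained in this way proves non-polytopality. For each of the $17\,755$ spheres on which the propagation derives a contradiction, I would output as a certificate the initial partial chirotope, the ordered list of Graßmann--Plücker deductions (specifying at each step the relation used and the sign forced), and the final relation that is violated. Such a certificate can be rechecked independently of the search procedure.

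The main obstacle will be computational rather than conceptual. The number of three-term Graßmann--Plücker relations on the $\binom{9}{5}$ basis tuples is large, and running propagation to closure on every sphere requires indexing the relations by their ``unknown slot'' so that each deduction can be performed efficiently; this is what makes the computation fit into the $800$ CPU-hours mentioned in the introduction. Careful handling is also required for relations in which some of the six chirotope values equal $0$ (arising from rule~\ref{rule1}), since the relation must then be read as a sign constraint rather than as a polynomial identity, and this changes which configurations force a new entry. Finally, one should confirm that the propagation is \emph{confluent}---i.e.\ that the derived sign assignment and the occurrence of a contradiction do not depend on the order in which deductions are applied---so that the resulting $17\,755$ non-realizability certificates are uniquely determined by the input sphere.
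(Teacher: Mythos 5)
Your proposal matches the paper's proof of this lemma: the paper likewise starts from the partial chirotope on $s_0$ of \autoref{lemma1}, repeatedly uses Graßmann--Plücker relations in which five of the six chirotope values are already known to force the sixth, and certifies each of the $17\,755$ non-polytopal spheres by the explicit finite chain of deductions ending in a contradiction. Your added remarks on zero entries and on confluence are sensible implementation concerns, but they are not needed for the lemma's correctness, since any single valid deduction chain ending in a contradiction already proves non-polytopality.
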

\begin{proof}
 For each of the $3$-sphere we provide the corresponding $s_0$, the chirotope together with a finite list of deductions, each expanding the definition of the partial chirotope to a new element using the Graßmann--Plücker relations until a contradiction is reached.
\end{proof}

In some cases using the method of \autoref{lemma2} does not lead to a contradiction and after a finite number of steps. 
We then have a partial chirotope $\chi$ on a set $s_1$, which contains $s_0$ and which cannot be enlarged by the steps described above. In some cases $s_1$ might be a complete chirotope. 

\begin{lemma}\label{lemma3}
 Out of the $316\,014$ combinatorial spheres  $3$-spheres with $9$ vertices, there are $83$~spheres, for which the partial chirotope $s_0$ is completed to a chirotope on $s_1$, which admits a biquadratic final polynomial. Therefore those $83$ spheres are not polytopal. 
\end{lemma}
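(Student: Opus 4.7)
The plan is, for each of the 83 spheres, to produce an explicit biquadratic final polynomial certifying non-realizability, following the technique of Bokowski and Richter--Gebert. Recall that for any seven vertices $a,b,c,d,e,f,g$, the three-term Graßmann--Plücker relation
\[
\chi(a,b,c,d,e)\chi(a,b,c,f,g) - \chi(a,b,c,d,f)\chi(a,b,c,e,g) + \chi(a,b,c,d,g)\chi(a,b,c,e,f) = 0
\]
must hold for every realizable chirotope. Whenever the partial chirotope on $s_1$ fixes signs that force two of the three bracket products above to have equal nonzero sign, the remaining product is forced to carry the opposite sign, and we obtain a strict inequality between absolute bracket products of the form $|p_1 p_2| > |q_1 q_2|$.

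First, for each of the 83 spheres I would enumerate all three-term Graßmann--Plücker relations whose six brackets all lie in $s_1$ and read off every inequality produced in this way. Taking logarithms, these become affine inequalities in the variables $\xi_S := \log|\chi(S)|$ indexed by $S\in s_1$. A \emph{biquadratic final polynomial} is a nonnegative rational combination of these inequalities in which the bracket-monomials on both sides of the resulting inequality coincide, so that a formal contradiction $0>0$ is reached; the existence of such a combination rules out any realization compatible with $\chi$ and hence any polytopal realization of the combinatorial sphere.

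Second, the search for such a combination is a standard linear feasibility problem in the log-coordinates, and by Farkas' lemma an exact rational certificate can always be extracted whenever a feasible solution exists. For each of the 83 spheres I would solve the LP, then record the list of three-term Graßmann--Plücker relations used together with the positive rational weights of the combination. The resulting certificate is finite and purely combinatorial, so its verification reduces to an arithmetic check that is completely independent of the numerical LP solver employed to locate it.

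The main obstacle is that the biquadratic final polynomial method is not complete: there is no a priori guarantee that such a certificate exists, and indeed for many non-realizable chirotopes none does. Thus one has to actually run the search for each of the 83 spheres, and the remaining non-realizable spheres (handled by further techniques in the sequel) must be distinguished from the present ones by the failure of this LP. The number of candidate three-term relations supported on $s_1$ can be large, so the LPs are not trivial, and extracting a sparse exact rational combination from a floating-point solution requires some care; once a certificate is in hand, however, the proof is short and self-contained.
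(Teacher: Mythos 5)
Your proposal matches the paper's proof in substance: the paper likewise certifies each of the $83$ cases by exhibiting the completed chirotope on $s_1$ (which in all $83$ cases turns out to be a complete chirotope) together with the infeasible linear program whose Farkas certificate is the biquadratic final polynomial, exactly the Bokowski--Richter-Gebert scheme you describe. Your caveat about the incompleteness of the method is also consistent with the paper, which notes that only $11$ of the $83$ cases admit such a certificate already on $s_0$, so the completion to $s_1$ is genuinely needed.
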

\begin{proof}
 For all the relevant $83$ cases, it turns out that $s_1$ is actually a complete chirotope. In each case, we provide the completed chirotope together with the infeasible linear program associated to the biquadratic final polynomial. 
\end{proof}
 Only $11$ of the $83$ cases  already admit a biquadratic final polynomial for the partial chirotope on the set $s_0$. 
 For the other cases there is no biquadratic final polynomial on the the partial chirotope and we need to complete the chirotope in order to prove non-polytopality. 

Since the sets of non-polytopal spheres in \autoref{lemma1}, \autoref{lemma2} and \autoref{lemma3} are disjoint, we obtain
 \begin{theorem}\label{thm:atleast}
  There are at most \[316\,014 - 24\,028 -17\,755 -83  = 274\,148\] $4$-polytopes with $9$ vertices. 
 \end{theorem}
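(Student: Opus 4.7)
The plan is essentially to combine the three preceding lemmas with \autoref{thm:combtypes} in an inclusion-exclusion style count, so the argument is very short; the only genuine content is verifying that the three classes of certified non-polytopal spheres are \emph{disjoint}, which is asserted in the sentence preceding the theorem.

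First, invoke \autoref{thm:combtypes} to pin down that the set $\mathcal{S}$ of combinatorial $3$-spheres on $9$ vertices has cardinality $316\,014$. Every combinatorial type of $4$-polytope with $9$ vertices gives, by taking the boundary complex, a combinatorial $3$-sphere on $9$ vertices, and two polytopes are combinatorially equivalent precisely when these boundary spheres are. Hence the number of combinatorial types of $4$-polytopes with $9$ vertices is bounded above by $|\mathcal{S}|$ minus the number of spheres in $\mathcal{S}$ that we can certify as non-polytopal.

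Next, let $A_1$, $A_2$, $A_3$ denote the sets of spheres certified non-polytopal by \autoref{lemma1}, \autoref{lemma2} and \autoref{lemma3} respectively, of sizes $24\,028$, $17\,755$ and $83$. I would then observe that these sets are pairwise disjoint: the three lemmas are applied sequentially as a cascading filter (first those already giving a violated Graßmann--Plücker relation on $s_0$; then, among the remaining spheres, those whose partial chirotope can be completed to a contradiction; then, among what still remains, those admitting a biquadratic final polynomial after completing to a full chirotope), so each sphere is counted in at most one $A_i$. This disjointness is a bookkeeping fact about how the proofs of the three lemmas are organized and is the only step one needs to double-check carefully; no further oriented-matroid content is required here.

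Concluding, the set of spheres certified non-polytopal has size $|A_1|+|A_2|+|A_3| = 24\,028 + 17\,755 + 83 = 41\,949$, and subtracting from $316\,014$ gives the stated bound of $274\,148$. The only possible obstacle is guaranteeing the disjointness of the $A_i$; if the lemmas had been stated as independent tests rather than as a cascade, one would instead need to write $|A_1\cup A_2\cup A_3|$ and verify the equality $|A_1|+|A_2|+|A_3|=316\,014-274\,148$ directly from the certificates, but the cascading formulation makes this automatic.
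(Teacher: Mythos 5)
Your proposal matches the paper's own (one-sentence) argument exactly: the paper derives the bound by observing that the sets of spheres ruled out in \autoref{lemma1}, \autoref{lemma2} and \autoref{lemma3} are disjoint and subtracting their sizes from the total of \autoref{thm:combtypes}, and your identification of the disjointness (via the cascading structure of the three certificates) as the only point needing verification is precisely right. One small slip: $24\,028 + 17\,755 + 83 = 41\,866$, not $41\,949$; with the corrected sum the subtraction $316\,014 - 41\,866 = 274\,148$ goes through as you intend.
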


  \section{Generating combinatorial types of polytopes}\label{sec2}
 We describe an algorithm to generate combinatorial types of polytopes. 
 Let $Q$ be a $d$-polytope with $n$ vertices and $k$ facets, let $\mathcal{H}(Q)$ denote the affine hyperplane arrangement consisting of the $k$ hyperplanes supporting the facets of $Q$. We view a supporting hyperplane $h$ as element in $\mathbb{R}\times\mathbb{R}^d$, associated to the hyperplane containing $x\in\mathbb{R}^d$ if and only if their dot product is zero: $(1,x)\cdot h=0.$ The faces of the hyperplane arrangement are given by:
 \[\faces{(\mathcal{H}(Q))}\colonequals \left\{ F_\alpha\subset \mathbb{R}^d \;\middle|\; \alpha = (\alpha_1, \alpha_2, \dots, \alpha_k)\in \{-1,0,1\}^k \text{ if } F_\alpha\neq\varnothing\right\},\]
 where
 \[F_\alpha = \left\{x\in\mathbb{R}^d\; \middle|\; \begin{aligned}(1,x)\cdot h_{i}&\leq 0 &&\text{if }\alpha_i\in\{-1, 0 \}\\(1,x)\cdot h_{i}&\geq 0 &&\text{if }\alpha_i\in\{0,1\} \end{aligned} \text{ for } 1\leq i\leq k\right\}\]
 By definition, the relative interiors of the faces partition $\mathbb{R}^d$:
 \[\mathbb{R}^d = \bigcup^\circ_{F\in\faces(\mathcal{H}(Q))} \relint(F).\]
 \begin{proposition}\label{propo1}

Let $Q$ be a $d$-polytope and $F\in\faces{(\mathcal{H}_j(Q))}$ a face in the hyperplane arrangement of its supporting hyperplanes. Then for any two points $q_1,q_2\in\relint(F)$ in the relative interior of $F$, the polytopes $Q_i\colonequals \conv(Q\cup\{q_i\})$ for $i=1,2$ are combinatorially equivalent.  
\end{proposition}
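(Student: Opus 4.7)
The plan is to reduce the statement to Grünbaum's \emph{beyond-beneath} theorem (\cite[Thm~5.2.1]{G67}), which describes the face lattice of $\conv(Q\cup\{q\})$ explicitly in terms of the face lattice of $Q$ together with the classification of each facet of $Q$ as being \emph{beyond}, \emph{on}, or \emph{beneath} the new point $q$.

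First, to any point $q\in\mathbb{R}^d$ I associate the sign vector
\[
\sigma(q)\;:=\;\bigl(\sign((1,q)\cdot h_1),\ldots,\sign((1,q)\cdot h_k)\bigr)\in\{-1,0,+1\}^k.
\]
The inequalities defining $F_\alpha$ say precisely that $q\in\relint(F_\alpha)$ if and only if $\sigma(q)=\alpha$; in particular $\sigma(q_1)=\sigma(q_2)$ whenever $q_1,q_2\in\relint(F)$ for a common arrangement face $F$.

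Second, I translate $\sigma(q)$ into visibility language: $\sigma_i(q)=-1$ means $q$ lies \emph{beneath} the facet of $Q$ supported by $h_i$, $\sigma_i(q)=0$ means $q$ lies \emph{on} that supporting hyperplane, and $\sigma_i(q)=+1$ means $q$ lies \emph{beyond} it. By \cite[Thm~5.2.1]{G67}, the face lattice of $\conv(Q\cup\{q\})$ is determined by the face lattice of $Q$ together with this tripartition of its facets. Consequently $\sigma(q_1)=\sigma(q_2)$ forces $Q_1$ and $Q_2$ to have isomorphic face lattices, i.e.\ to be combinatorially equivalent.

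The main subtlety lies in the second step in the degenerate case when several coordinates of $\sigma(q)$ vanish, i.e.\ when $q$ lies on several of the supporting hyperplanes $h_i$: one has to verify that the new faces of $\conv(Q\cup\{q\})$ of the form $\conv(G\cup\{q\})$, for $G$ a face of $Q$, depend only on $\sigma(q)$ and not on the finer position of $q$ within $\relint(F)$. This follows from the observation that for each face $G$ of $Q$ the affine hull $\aff(G)$ is the intersection of precisely those $h_i$ that contain $G$; hence the predicate $q\in\aff(G)$, and more generally every incidence condition fed into the beyond-beneath construction, is determined by $\sigma(q)$ alone. With this in hand the two steps combine to give the proposition.
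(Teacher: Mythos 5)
Your proposal is correct and matches the paper's approach: the paper likewise disposes of this proposition by declaring it a reformulation of Gr\"unbaum's Theorem~5.2.1 (beyond/beneath/on), without spelling out the reduction. You additionally supply the details the paper omits --- that $\relint(F_\alpha)$ is exactly the level set of the sign vector, and that in the degenerate case the predicate $q\in\aff(G)$ is determined by the signs because $\aff(G)$ is the intersection of the supporting hyperplanes of the facets containing $G$ --- which is a welcome elaboration rather than a different route.
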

The proposition is a reformulation of \cite[Thm 5.2.1]{G67} and a proof can be found there.

Motivated by \autoref{propo1}, we proceed inductively to generate combinatorial types of $d\text{-polytopes}$ with $k$ vertices, starting from a set of polytopes with $k-1$ vertices. Given a polytope $Q$  with $k-1$ vertices, we choose  an interior point $p$ from each face of $\mathcal{H}(Q)$. Then we form the convex hull of $Q\cup\{p\}$ and check if this yields a polytope with $k$ vertices. If this is the case, we check if we have seen the combinatorial type of this polytope before. If not, we add it to our output. In order to check quickly if we have already found a combinatorial type, we calculate a canonical form of the (directed) vertex-facet graph, which depends only on the isomorphism class of the graph and therefore only on the combinatorial type of the polytope: it is possible to recover the entire face lattice from the vertex-facet graph. The canonical form can then be used in a hash table or an dictionary.

\begin{algorithm}[H]
    \caption{Generating polytopes}
    \label{genpoly}
     \hspace*{\algorithmicindent}\begin{itemize}[leftmargin=2cm]
     \item[\textbf{Input:}] An integer $k$ and a set of polytopes with $k-1$ vertices $\mathcal{Q}$
     \item[\textbf{Output:}] A dictionary $\mathcal{P}$ with key-value pairs $(G,P)$, where $P$ is a polytope with $k$ vertices and $G$ is a canonical form of the vertex-facet graph of $P$.
    \end{itemize}

    \begin{algorithmic}[1] 
         \Procedure{update}{$\mathcal{P},P$} \Comment{Update $\mathcal{P}$ with the combinatorial type of $P$.}
          \State $G\gets$ canonical form of vertex-facet graph of $P$\Comment{\parbox[t]{.34\linewidth}{depends only on the isomorphism class of the graph}}\label{line:canonical}
          \If{$G$ is not key of $\mathcal{P}$}\label{line:ifkeynot}
              \State $\mathcal{P}[G]\gets P$\Comment{add key-value pair $(G,P)$ to $\mathcal{P}$}
          \EndIf
         \EndProcedure
         \vspace*{1em}
            \State $\mathcal{P}\gets $ empty dictionary \Comment{initialize the output dictionary}
            \For{$Q \in \mathcal{Q}$} \label{line:qinq}
              \For{$F\in\faces{(\mathcal{H}(Q))}$}\Comment{iterate over all faces in hyperplane arrangement}\label{line:hyperplane}
                 \State $p\gets$ interior point of $F$\Comment{different choices are possible, e.g. center of (bounded) $F$}\label{line:interior}
                 \State $P\gets\conv(Q\cup\{p\})$ 
                 \If{number of vertices of $P = k$}
                        \State  \Call{update}{$\mathcal{P}, P$}
                 \EndIf
              \EndFor
            \EndFor
        \State \Return $\mathcal{P}$
    \end{algorithmic}
\end{algorithm}
This is all summarized in \autoref{genpoly}; let us explain some details of this algorithm:
\begin{description}
 \item[\autoref{line:canonical},] canonical form of vertex-facet graph of $P$: This can be computed by using \emph{bliss} \cite{bliss}.
 \item[\autoref{line:ifkeynot}:] if $G$ is already a key in $\mathcal{P}$, that is, there is already a polytope $P'$, which is combinatorially isomorphic to $P$ in the dictionary $\mathcal{P}$, then we could still decide to update the dictionary; for example if $P$ has a shorter description than $P'$, i.e. simpler rational coordinates.
 \item[\autoref{line:qinq}] $Q \in \mathcal{Q}$: at this point the algorithm can be parallelized; each case $Q$ can be run separately yielding a dictionary $\mathcal{P}_Q$. Those must then be collected to give the desired dictionary $\mathcal{P}$.  
 \item[\autoref{line:hyperplane},] $F\in\faces{(\mathcal{H}(Q))}$: the hyperplane arrangement can be computed using \emph{sagemath} methods \cite{sage}.
 \item[\autoref{line:interior},] $p\gets$ interior point of $F$: Here we have some choice for interior point of the rational, not necessarily bounded polyhedron, which is face of the hyperplane arrangement $F$.  To avoid dealing with unbounded polyhedra, we intersect the entire hyperplane arrangement $\mathcal{H}(Q)$ with a rational cuboid (for example axes aligned) that contains all of the vertices of $\mathcal{H}(Q)$ in its interior. The vertices of $\mathcal{H}(Q)$ are its zero-dimensional faces. In general, it has more vertices than $Q$. 
 This reduces the problem to finding a rational point in the relative interior of a rational polytope. We can simply take the barycenter of the vertices of the polyhedron. 
\end{description}

We illustrate the procedure in \autoref{figure1} by looking at what happens to an irregular hexagon. Since the classification of polytopes in dimension $2$ is so simple, it might seem like a wasteful way to generate a heptagon, but in higher dimensions the classifications get more interesting.
\begin{figure}[htp]   
\begin{center}
\subfloat[The hyperplane arrangement induced by the facets\dots]{
  \begin{tikzpicture}%
	[scale=.25000000,
	back/.style={loosely dotted, thin},
	edge/.style={color=black!95!black, line width=0.5mm},
	facet/.style={fill=black!95!black,fill opacity=0.00000},
	mainfacet/.style={fill=black!95!black,fill opacity=0.50000},
    nfacet/.style={fill=white!95!black,fill opacity=0.75000},
	vertex/.style={fill=none, circle, thin, draw=none},
	mainvertex/.style={inner sep=1pt,circle,draw=blue!90!black,fill=blue!75!black,thick,anchor=base},
	center2/.style={inner sep=1pt,circle,draw=red!25!black,fill=red!75!black,thick,anchor=base},
	center1/.style={inner sep=1pt,circle,draw=orange!25!black,fill=orange!75!black,thick,anchor=base},
	center0/.style={inner sep=1pt,circle,draw=yellow!25!black,fill=yellow!75!black,thick,anchor=base},
	ncenter/.style={inner sep=3pt,circle,draw=blue!75!black,fill=blue!75!black,line width=0.7mm,anchor=base, opacity=0.50000}
	]
\clip (-15.0000000000000, -15.0000000000000) rectangle (22.0000000000000, 14.0000000000000);

 \input{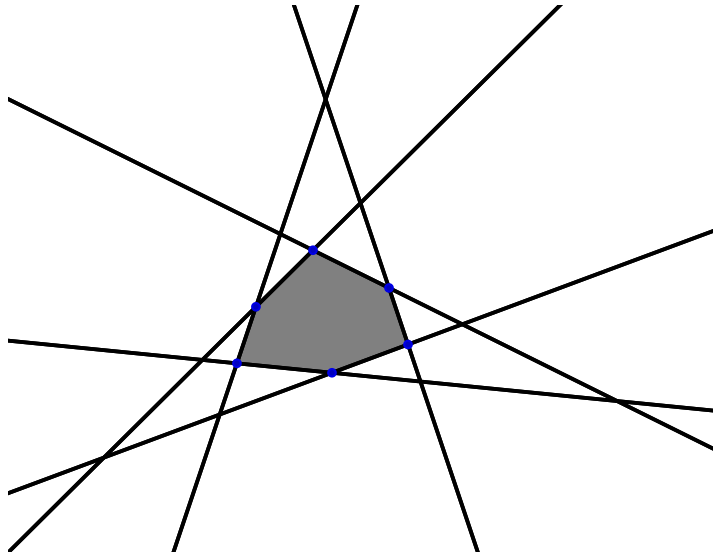}
  \end{tikzpicture} }
  
  \subfloat[\dots in a bounding box with barycenters of all faces.]{
    \begin{tikzpicture}%
	[scale=.25000000,
	back/.style={loosely dotted, thin},
	edge/.style={color=black!95!black, line width=0.5mm},
	facet/.style={fill=black!95!black,fill opacity=0.00000},
	mainfacet/.style={fill=black!95!black,fill opacity=0.50000},
    nfacet/.style={fill=white!95!black,fill opacity=0.75000},
	vertex/.style={fill=none, circle, thin, draw=none},
	mainvertex/.style={inner sep=1pt,circle,draw=blue!90!black,fill=blue!75!black,thick,anchor=base},
	center2/.style={inner sep=1pt,circle,draw=red!25!black,fill=red!75!black,thick,anchor=base},
	center1/.style={inner sep=1pt,circle,draw=orange!25!black,fill=orange!75!black,thick,anchor=base},
	center0/.style={inner sep=1pt,circle,draw=yellow!25!black,fill=yellow!75!black,thick,anchor=base},
	ncenter/.style={inner sep=3pt,circle,draw=blue!75!black,fill=blue!75!black,line width=0.7mm,anchor=base, opacity=0.50000}
	]
 \input{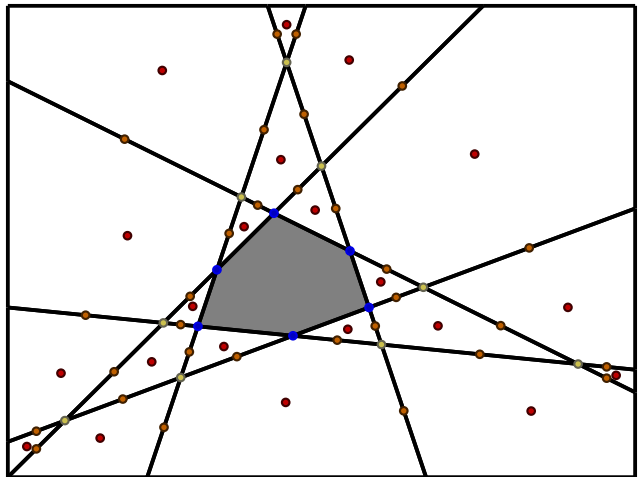}
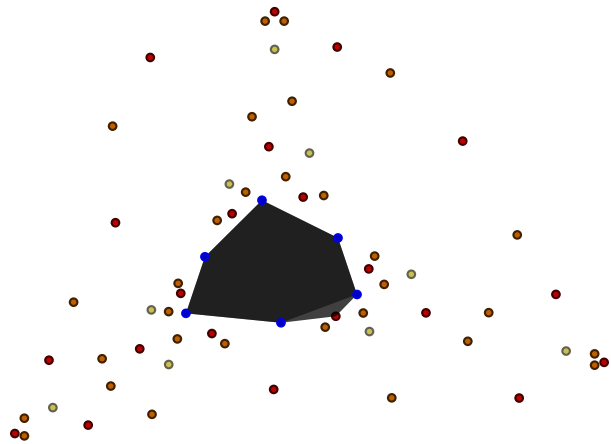
  \end{tikzpicture} }
  
  \subfloat[Adding a point give a new combinatorial type.]{
      \begin{tikzpicture}%
	[scale=.25000000,
	back/.style={loosely dotted, thin},
	edge/.style={color=black!95!black, line width=0.5mm},
	facet/.style={fill=black!95!black,fill opacity=0.00000},
	mainfacet/.style={fill=black!95!black,fill opacity=0.50000},
    nfacet/.style={fill=black!95!black,fill opacity=0.75000},
	vertex/.style={fill=none, circle, thin, draw=none},
	mainvertex/.style={inner sep=1pt,circle,draw=blue!90!black,fill=blue!75!black,thick,anchor=base},
	center2/.style={inner sep=1pt,circle,draw=red!25!black,fill=red!75!black,thick,anchor=base},
	center1/.style={inner sep=1pt,circle,draw=orange!25!black,fill=orange!75!black,thick,anchor=base},
	center0/.style={inner sep=1pt,circle,draw=yellow!25!black,fill=yellow!75!black,thick,anchor=base},
	ncenter/.style={inner sep=3pt,circle,draw=blue!75!black,fill=blue!75!black,line width=0.7mm,anchor=base, opacity=0.50000}
	]
 \useasboundingbox (-13.0000000000000, -13.0000000000000) rectangle (20.0000000000000, 12.0000000000000);
\fill[mainfacet] (-3.00000, -5.00000) -- (-2.00000, -2.00000) -- (1.00000, 1.00000) -- (5.00000, -1.00000) -- (6.00000, -4.00000) -- (2.00000, -5.50000) -- cycle {};

\node[center2] at (-3.272727272727273, -3.9393939393939394){};
\node[center2] at (14.53103448275862, -9.493103448275862){};
\node[center2] at (4.885057471264368, -5.155172413793103){};
\node[center2] at (-1.6349206349206349, -6.071428571428571){};
\node[center2] at (-12.0, -11.375){};
\node[center2] at (-4.876190476190477, 8.571428571428571){};
\node[center2] at (3.1666666666666665, 1.1666666666666667){};
\node[center2] at (16.464285714285715, -3.994642857142857){};
\node[center2] at (-0.5714285714285714, 0.2857142857142857){};
\node[center2] at (9.62807881773399, -4.973522167487685){};
\node[center2] at (6.619047619047619, -2.642857142857143){};
\node[center2] at (1.6167487684729065, -9.035960591133005){};
\node[center2] at (1.3630952380952381, 3.8392857142857144){};
\node[center2] at (-10.204545454545455, -7.485795454545454){};
\node[center2] at (1.6666666666666667, 11.0){};
\node[center2] at (11.55952380952381, 4.136904761904762){};
\node[center2] at (19.0, -7.6){};
\node[center2] at (-8.142857142857142, -10.928571428571429){};
\node[center2] at (4.958333333333333, 9.125){};
\node[center2] at (-5.43073593073593, -6.883116883116883){};
\node[center2] at (-6.706493506493507, -0.19220779220779222){};

\node[center1] at (6.928571428571429, -1.9642857142857142){};
\node[center1] at (-1.3571428571428572, -0.07142857142857142){};
\node[center1] at (7.75, 7.75){};
\node[center1] at (4.327586206896552, -5.732758620689655){};
\node[center1] at (12.928571428571429, -4.964285714285714){};
\node[center1] at (-11.5, -11.5){};
\node[center1] at (7.428571428571429, -3.4642857142857144){};
\node[center1] at (-4.785714285714286, -10.357142857142858){};
\node[center1] at (-11.5, -10.5625){};
\node[center1] at (11.827586206896552, -6.482758620689655){};
\node[center1] at (7.827586206896552, -9.482758620689655){};
\node[center1] at (18.5, -7.15){};
\node[center1] at (-0.9523809523809523, -6.607142857142857){};
\node[center1] at (2.5833333333333335, 6.25){};
\node[center1] at (2.1666666666666665, 10.5){};
\node[center1] at (-3.4523809523809526, -6.357142857142857){};
\node[center1] at (-3.909090909090909, -4.909090909090909){};
\node[center1] at (0.47619047619047616, 5.428571428571429){};
\node[center1] at (2.25, 2.25){};
\node[center1] at (-8.909090909090908, -4.409090909090909){};
\node[center1] at (14.428571428571429, -0.8392857142857143){};
\node[center1] at (-7.409090909090909, -7.409090909090909){};
\node[center1] at (6.327586206896552, -4.982758620689655){};
\node[center1] at (0.14285714285714285, 1.4285714285714286){};
\node[center1] at (-3.409090909090909, -3.409090909090909){};
\node[center1] at (-6.857142857142857, 4.928571428571429){};
\node[center1] at (4.25, 1.25){};
\node[center1] at (1.1666666666666667, 10.5){};
\node[center1] at (-6.9523809523809526, -8.857142857142858){};
\node[center1] at (18.5, -7.75){};

\node[center0] at (8.857142857142858, -2.9285714285714284){};
\node[center0] at (3.5, 3.5){};
\node[center0] at (17.0, -7.0){};
\node[center0] at (-3.9047619047619047, -7.714285714285714){};
\node[center0] at (-10.0, -10.0){};
\node[center0] at (-4.818181818181818, -4.818181818181818){};
\node[center0] at (6.655172413793103, -5.9655172413793105){};
\node[center0] at (1.6666666666666667, 9.0){};
\node[center0] at (-0.7142857142857143, 1.8571428571428572){};

\fill[nfacet] (4.88506, -5.15517) -- (2.00000, -5.50000) -- (-3.00000, -5.00000) -- (-2.00000, -2.00000) -- (1.00000, 1.00000) -- (5.00000, -1.00000) -- (6.00000, -4.00000) -- cycle {};

\node[mainvertex] at (-3.00000, -5.00000)     {};
\node[mainvertex] at (-2.00000, -2.00000)     {};
\node[mainvertex] at (6.00000, -4.00000)     {};
\node[mainvertex] at (1.00000, 1.00000)     {};
\node[mainvertex] at (2.00000, -5.50000)     {};
\node[mainvertex] at (5.00000, -1.00000)     {};
  \end{tikzpicture} }
  \end{center}
\caption{Generating a heptagon from a hexagon.}\label{figure1} 
\end{figure}

We use an implementation of \autoref{genpoly} for the generation of $4$-polytopes. We start with a realization of the simplex comprised of the origin together with the standard basis vectors of $\mathbb{R}^4$. In running \autoref{genpoly}, there is some choice involved: in \autoref{line:interior}, we choose a point in the interior of a (potentially unbounded) polyhedron.

In a first run, we intersect the unbounded polyhedra with an axes aligned cuboid, which contains all the vertices of $\mathcal{H}(Q)$ with a padding of $1$ unit. For example when going from the simplex in the first step to polytopes with $6$ vertices, we intersection the cells in the hyperplane arrangement with the axes aligned cuboid given by the two coordinates $(-1,-1,-1,-1)$ and $(2,2,2,2)$. Then we choose as an interior point the barycenter. 

In a second run, we choose the same bounding cuboid, but choose the interior point of the bounded polyhedra differently: we strive for comparatively `simple' rational coordinates. For example, we might look for rational numbers with small absolute values for numerator and denominator. (It is of course conceivable to take another definition of `simple'.)
We pick a \emph{subset} of vertices from the set of all vertices of the polyhedron that affinely span the affine hull of the polyhedron. Then we look at the barycenter of this subset of vertices and choose the subset with the `simplest' rational coordinates. 

Putting together the results from these two runs, we obtain
\begin{theorem}
 There are at least $274\,148$ combinatorial types of $4$-polytopes with $9$ vertices.
\end{theorem}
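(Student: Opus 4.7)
The plan is to exhibit at least $274\,148$ pairwise combinatorially distinct $4$-polytopes with $9$~vertices by explicit construction, using \autoref{genpoly} in an iterated fashion. We start from a single concrete realization of the $4$-simplex, namely the convex hull of the origin together with the standard basis of $\mathbb{R}^4$, which is the unique combinatorial type of $4$-polytope on $5$~vertices. Invoking \autoref{genpoly} with input $k=6$ and $\mathcal{Q}=\{\Delta^4\}$ produces a dictionary $\mathcal{P}_6$ of realized $4$-polytopes with $6$~vertices; feeding $\mathcal{P}_6$ into \autoref{genpoly} with $k=7$ yields $\mathcal{P}_7$, and so on up to $\mathcal{P}_9$. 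By \autoref{propo1}, whenever we pick an interior point $p$ of a face $F$ of the hyperplane arrangement $\mathcal{H}(Q)$, the combinatorial type of $\conv(Q\cup\{p\})$ depends only on $F$, so it suffices to pick one representative per face. Combinatorial isomorphism testing is then performed on the directed vertex-facet graph via canonical forms computed with \emph{bliss}, so distinct keys in $\mathcal{P}_9$ certify distinct combinatorial types.

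At each stage, the main implementation issue is the choice of interior point in \autoref{line:interior} of \autoref{genpoly}: since $F$ may be unbounded and only its relative interior matters, we intersect the entire arrangement $\mathcal{H}(Q)$ with a fixed axis-aligned rational cuboid large enough to contain all vertices of $\mathcal{H}(Q)$ in its interior, and then take the barycenter of the vertices of the resulting bounded rational polytope. This guarantees rational coordinates throughout, which in turn means every combinatorial type we produce is automatically realized by a rational polytope.

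As Grünbaum's exercise \cite[Ex.~5.2.1]{G67} warns, there is no a~priori reason that the specific choice of representative inside each combinatorial type of a $k$-vertex polytope will yield \emph{all} combinatorial types with $k+1$ vertices reachable from that type. To mitigate this, the plan is to run the pipeline twice with different rules for selecting the interior point: the first run uses the plain barycenter of the cuboid-truncated face as above, and the second run searches among barycenters of affinely spanning subsets of the vertices of that face for one with numerators and denominators of small absolute value. Merging the two resulting dictionaries (using the canonical form as the key) yields a single collection of combinatorial types. The expected main obstacle is computational: the hyperplane arrangements $\mathcal{H}(Q)$ for polytopes $Q$ with $7$ or $8$ vertices in dimension $4$ have very many faces, and the loop in \autoref{line:qinq} must be parallelized over $\mathcal{Q}$ on the \emph{allegro} cluster to be feasible (cf.\ the roughly $2000$~CPU-hours reported).

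Once the pipeline terminates, the number of keys in the final merged dictionary $\mathcal{P}_9$ is a lower bound on the number of combinatorial types of $4$-polytopes with $9$~vertices, since each key corresponds to an explicit rational realization. It remains to verify that this count is at least $274\,148$; the computation reported in \autoref{sec2} shows that this lower bound is indeed attained. Combined with the matching upper bound from \autoref{thm:atleast}, this simultaneously proves \autoref{thm:polytopes} and establishes rationality of every $4$-polytope with $9$~vertices.
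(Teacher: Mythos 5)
Your proposal follows essentially the same route as the paper: iterate \autoref{genpoly} from the standard $4$-simplex up to $9$ vertices, justify the finitely many insertion points via \autoref{propo1}, handle unbounded faces by truncating with a rational cuboid, distinguish types by canonical forms of vertex-facet graphs, and perform two runs with different interior-point heuristics before merging; the paper's own proof is exactly the resulting list of explicit rational coordinates. The argument is correct, with the (shared, unavoidable) caveat that the final inequality rests on the reported computation.
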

\begin{proof}
 We provide rational coordinates for all combinatorial types in question. 
\end{proof}
This theorem together with \autoref{thm:atleast} implies 
  \begingroup
\def\thedefinition{\ref{thm:polytopes}}
\begin{theorem} There are precisely $274\,148$ combinatorial types of $4$-polytopes with $9$ vertices.
\end{theorem}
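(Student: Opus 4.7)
The plan is to combine the upper and lower bounds already established in the excerpt. From \autoref{thm:atleast} we have the upper bound: starting with the $316\,014$ combinatorial $3$-spheres with $9$ vertices enumerated in \autoref{thm:combtypes}, and subtracting the $24\,028 + 17\,755 + 83 = 41\,866$ non-polytopal spheres identified via the oriented-matroid arguments of \autoref{lemma1}, \autoref{lemma2}, and \autoref{lemma3} (whose disjointness is asserted just before \autoref{thm:atleast}), we obtain at most $274\,148$ polytopal combinatorial types. From the immediately preceding theorem we have the matching lower bound: the two runs of \autoref{genpoly} described above produce rational realizations for $274\,148$ combinatorially distinct $4$-polytopes on $9$ vertices.

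The proof then reduces to observing that these two numbers coincide, which forces every sphere not excluded by \autoref{lemma1}--\autoref{lemma3} to be realized by the output of \autoref{genpoly}, and simultaneously forces the $274\,148$ realized polytopes to be precisely the complement of the $41\,866$ provably non-polytopal spheres. Concretely, I would proceed as follows: first, for each rational realization $P$ produced by \autoref{genpoly}, compute the canonical form of its vertex-facet graph (using \emph{bliss} as in \autoref{line:canonical}) and also match it against the canonical forms of the $316\,014$ spheres from \autoref{sec:genspheres} to verify that the realized types sit inside the master enumeration. Second, I would verify that no realized type appears in the non-polytopal list from \autoref{sec:nonrealize}, i.e.\ that the set of canonical forms from the lower-bound step is disjoint from the $41\,866$ excluded canonical forms. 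Both checks amount to hash-table lookups on precomputed canonical graph forms and therefore run in essentially linear time in the size of the lists.

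The main obstacle, as Grünbaum's quoted warning makes explicit, is not the counting step itself but the a priori possibility that \autoref{genpoly} might miss combinatorial types: the algorithm is sensitive to the specific rational realizations used at each inductive step, and in general some types on $k$ vertices can only be reached from non-generic representatives of types on $k-1$ vertices. This is precisely why the two complementary bounds are indispensable---the upper bound certifies that nothing has been missed, since the shortfall would have to consist of spheres already proved non-polytopal, a contradiction. Practically, the risk is mitigated by running \autoref{genpoly} twice with different interior-point heuristics (the barycenter and the simplest-rational-subset strategies described above), and by parallelizing over $Q\in\mathcal{Q}$ in \autoref{line:qinq} so that the full hyperplane arrangement $\mathcal{H}(Q)$ can be explored for every previously found $8$-vertex polytope.

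In summary, the proof is a simple accounting argument: $274\,148 \leq \#\{4\text{-polytopes on }9\text{ vertices}\} \leq 274\,148$, where the lower bound is witnessed by the explicit rational coordinate data and the upper bound by the oriented-matroid non-realizability certificates. The computational bulk of the work has been carried out in \autoref{sec:genspheres}, \autoref{sec:nonrealize}, and the present section, and the statement follows by combining them.
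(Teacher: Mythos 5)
Your proposal is correct and matches the paper's own argument exactly: the paper derives this theorem by combining the lower bound of $274\,148$ from the rational realizations produced by \autoref{genpoly} with the upper bound of $316\,014 - 24\,028 - 17\,755 - 83 = 274\,148$ from \autoref{thm:atleast}. The additional consistency checks you describe (matching canonical forms against the master enumeration and the non-polytopal list) are a sensible elaboration of what the equality of the two bounds already forces.
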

\addtocounter{definition}{-1}
\endgroup 
 \section{Applications}
 The complete classification of combinatorial $3$-spheres and $4$-polytopes with up to $9$ vertices immediately has some applications. We only want to provide two such applications here. 
 
 \subsection{Non-realizable flag \texorpdfstring{$f$}{f}-vectors}
 Recently, Brinkmann and Ziegler provided the first example of a flag $f$-vector of a combinatorial sphere, that does not appear as the flag $f$-vector of a polytope, \cite{BZ15}. Such a flag $f$-vector is called \emph{non-realizable}. The non-realizable flag $f$-vector they provide is $(f_0, f_1, f_2, f_3; f_{02}) = (12, 40, 40, 12; 120)$. There is precisely one sphere, but no polytope with this flag $f$-vector. Our complete classification gives three additional examples of non-realizable flag $f$-vectors. The non-realizable flag $f$-vectors are those in \autoref{tab:flag} that have a ``$0$'' entry in the columns ``$4$-polytopes''. They are $(9, 25, 26, 10; 50)$, $(9, 27, 29, 11; 53)$ and $(9, 27, 30, 12; 57)$. 
 For the last two we have two types of combinatorial spheres and for the first one there is a unique type of combinatorial sphere. We give the sphere as a list of facets in \autoref{tab:nonflag}. (Here vertices are the set $\{1,2,3,4,5,6,7,8,9\}$ and a facet $12345$ is an abbreviation for the set $\{1,2,3,4,5\}$.)
\begin{table}[h!]
 \begin{tabular}{c|l}
  flag $f$-vector &facets of non-realizable $3$-sphere \\\hline
$(9, 25, 26, 10; 50)$ &[12345,12469,12578,12678,13468,1358,23459,25679,346789,35789]\\\hdashline
$(9, 27, 29, 11; 53)$ &[12346,12357,12678,1345,14568,15789,2349,23579,24679,34589,46789]\\
$(9, 27, 29, 11; 53)$ &[12345,12469,12567,13468,13578,1678,23489,2359,25679,35789,46789]\\\hdashline
$(9, 27, 30, 12; 57)$ &[12345,12468,12567,13458,15789,16789,23479,2357,24679,34689,3579,3589]\\
$(9, 27, 30, 12; 57)$ &[1234,12358,1246,12567,13468,15789,16789,23457,24679,34579,34689,3589]\\
\end{tabular} \caption{Non-realizable flag $f$-vectors and spheres with those flag $f$-vectors}\label{tab:nonflag}
\end{table}
 \subsection{Vertex-edges graphs of polytopes}
 In his PhD-thesis \cite{E14}, Espenschied examines under what circumstances the complete $t$-partite graph $K_{n_1,n_2,\dots, n_t}$ can appear as the vertex-edge graph of a polytope. 
 \begin{conjecture}[{Espenschied's conjecture \cite[p.82]{E14}}]
  If $K_{n_1,n_2,\dots, n_t}$ is the graph of a polytope, then $\{n_1, n_2, \dots n_t\} \subset  \{1, 2\}$ as sets.
 \end{conjecture}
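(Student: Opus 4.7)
The plan is to leverage the enumeration in Theorem~\ref{thm:polytopes} as a filter: for each of the $274\,148$ combinatorial types of $4$-polytopes with $9$~vertices, extract the vertex-edge graph from the face lattice and test whether it is complete multipartite with some part of size $\geq 3$. By polar duality this verification also extends automatically to $4$-polytopes with $9$~facets. Combined with the classifications already covering $4$-polytopes with fewer vertices and the Steinitz classification of $3$-polytope graphs, this settles the conjecture for all small cases accessible to explicit enumeration.

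As a preliminary step, I would list the candidate complete multipartite graphs that can conceivably appear. The connectivity of $K_{n_1,\ldots,n_t}$ on $n = \sum_i n_i$ vertices is $n - \max_i n_i$, so Balinski's theorem forces $n - \max_i n_i \geq 4$ for any $4$-polytope graph. Combined with $n \leq 9$ and $\max_i n_i \geq 3$, this restricts us to a small explicit list including $K_{3,4}$, $K_{3,5}$, $K_{3,6}$, $K_{3,3,3}$, $K_{3,2,2}$, $K_{3,1,2,2}$, $K_{4,5}$, $K_{4,1,4}$, $K_{5,4}$ and a handful more. For each candidate one precomputes the canonical form of the graph and stores it in a hash table.

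The main computational step is then a single pass over the polytope list: for each combinatorial type, compute the canonical form of its vertex-edge graph using the same \emph{bliss}-based machinery described in Section~\ref{sec2} and look it up in the candidate hash. Since this is constant time per polytope, the whole check runs in time linear in the classification size. Failure would exhibit an explicit counterexample to Espenschied's conjecture, together with rational coordinates from our realization data; success establishes the conjecture for the $4$-polytopes in our range.

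The main obstacle is not the verification itself, which is essentially bookkeeping on top of the classification, but the limited reach of the approach. Even the prominent candidate $K_{3,3,3}$, which has $9$ vertices and is $6$-regular and $6$-connected, could in principle be the graph of a $d$-polytope for any $2\leq d\leq 6$; while $d=3$ is immediately excluded because $K_{3,3,3}\supset K_{3,3}$ is non-planar and $d=4$ is covered by our enumeration, the cases $d=5,6$ need to be handled separately, for $d=6$ via the Gale-diagrammatic classification of $(d+3)$-vertex polytopes. A conceptual proof of the full conjecture presumably requires a general connectivity or face-counting inequality that currently appears out of reach; the classification-based evidence presented here is meant as support, not a general resolution.
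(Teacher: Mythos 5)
The computational pipeline you describe is essentially the one the paper runs, but your expected outcome is the opposite of what actually happens: the conjecture is \emph{false}, and the paper's treatment of this statement is a disproof, not a verification. Scanning the vertex--edge graphs of all $274\,148$ combinatorial types of $4$-polytopes with $9$ vertices turns up $14$ counterexamples, with graphs $K_{3,2,2,2}$ (one combinatorial type), $K_{3,2,2,1,1}$ (two types), $K_{3,2,1,1,1,1}$ (five types) and $K_{3,1,1,1,1,1,1}$ (six types) --- each complete multipartite with a part of size $3$. So if you actually execute the check you propose, it does not ``establish the conjecture for the $4$-polytopes in our range''; it lands in the branch you label ``failure'' and produces explicit counterexamples with rational coordinates. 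Your write-up treats success as the default and failure as a hypothetical, which inverts the actual result; everything downstream (the worry about $d=5,6$, the Gale-diagram cases, the remark that the evidence is ``support, not a general resolution'') is moot once a $4$-dimensional counterexample exists.

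A secondary point: your candidate list is skewed toward graphs with few large parts ($K_{3,4}$, $K_{3,5}$, $K_{3,3,3}$, \dots), whereas every actual counterexample has a single part of size $3$ padded out with parts of size $1$ and $2$; for instance $K_{3,1,1,1,1,1,1}$ is just $K_9$ minus a triangle. Your Balinski filter $n-\max_i n_i\geq 4$ does admit these graphs, so they are presumably hidden in your ``handful more,'' but an argument that only examines the prominent bipartite and tripartite candidates would miss all $14$ counterexamples. The method is sound as a search; the error is in presenting it as a proof of a statement that the search itself refutes.
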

 
 We disprove the above conjecture by looking at the graphs of all $4$-polytopes with $9$ vertices and find a number of counter-examples to this conjecture. In fact, there are $14$ polytopes that contradict Espenschied's conjecture. We list the complete multipartite graphs and the number of combinatorial types of polytopes with that graph in \autoref{tab:espenschied}. See the survey paper by Bayer \cite{Ba17} for more on graphs of polytopes.
 \begin{table} 
\begin{tabular}{l|r|r|r|r}
 Graph&$K_{3, 2, 2, 2}$&$K_{3, 2, 2, 1, 1}$&$K_{3, 2, 1, 1, 1, 1}$&$K_{3, 1, 1, 1, 1, 1, 1}$\\\hline
 number of combinatorial types &1&2&5&6
\end{tabular}
\caption{Number of counter-examples to Espenschied's conjecture}\label{tab:espenschied}
\end{table}

\pagebreak
 \section{Tables of results}\label{sec:tables}
 \begin{table}[!h]
 \begin{tabular}{c|r|r|r|r}
   $f$-vector &\rotatebox{80}{$3$-spheres}& \rotatebox{80}{$4$-polytopes}& \rotatebox{80}{non-realizable}&\rotatebox{80}{\# of $f$-vectors}\\\hline
(5,  *,  *, 5)&  1  &  1  & 0  & 1\\
\hline
(6,  *,  *, 6)&  1  &  1  & 0  & 1\\
(6,  *,  *, 7)&  1  &  1  & 0  & 1\\
(6,  *,  *, 8)&  1  &  1  & 0  & 1\\
(6,  *,  *, 9)&  1  &  1  & 0  & 1\\
\hline
(6,  *,  *, *) & 4 & 4 & 0 & 4\\
\hline\hline
(7,  *,  *, 6)&  1  &  1  & 0  & 1\\
(7,  *,  *, 7)&  3  &  3  & 0  & 2\\
(7,  *,  *, 8)&  5  &  5  & 0  & 2\\
(7,  *,  *, 9)&  7  &  7  & 0  & 2\\
(7,  *,  *, 10)&  6  &  6  & 0  & 2\\
(7,  *,  *, 11)&  4  &  4  & 0  & 2\\
(7,  *,  *, 12)&  3  &  3  & 0  & 2\\
(7,  *,  *, 13)&  1  &  1  & 0  & 1\\
(7,  *,  *, 14)&  1  &  1  & 0  & 1\\
\hline
(7,  *,  *, *) & 31 & 31 & 0 & 15\\
\hline\hline
(8,  *,  *, 6)&  1  &  1  & 0  & 1\\
(8,  *,  *, 7)&  5  &  5  & 0  & 2\\
(8,  *,  *, 8)&  27  &  27  & 0  & 3\\
(8,  *,  *, 9)&  76  &  76  & 0  & 4\\
(8,  *,  *, 10)&  138  &  137  & 1  & 4\\
(8,  *,  *, 11)&  209  &  205  & 4  & 3\\
(8,  *,  *, 12)&  231  &  225  & 6  & 4\\
(8,  *,  *, 13)&  226  &  218  & 8  & 3\\
(8,  *,  *, 14)&  173  &  166  & 7  & 4\\
(8,  *,  *, 15)&  122  &  117  & 5  & 3\\
(8,  *,  *, 16)&  70  &  65  & 5  & 3\\
(8,  *,  *, 17)&  33  &  31  & 2  & 2\\
(8,  *,  *, 18)&  16  &  14  & 2  & 2\\
(8,  *,  *, 19)&  5  &  4  & 1  & 1\\
(8,  *,  *, 20)&  4  &  3  & 1  & 1\\
\hline
(8,  *,  *, *) & 1336 & 1294 & 42 & 40\\
\hline\hline
 \end{tabular}\hfill
 \begin{tabular}{c|r|r|r|r}
   $f$-vector &\rotatebox{80}{$3$-spheres} & \rotatebox{80}{$4$-polytopes}& \rotatebox{80}{non-realizable}&\rotatebox{80}{\# of $f$-vectors}\\\hline
(9,  *,  *, 6)&  1  &  1  & 0  & 1\\
(9,  *,  *, 7)&  7  &  7  & 0  & 2\\
(9,  *,  *, 8)&  76  &  76  & 0  & 4\\
(9,  *,  *, 9)&  467  &  463  & 4  & 6\\
(9,  *,  *, 10)&  1905  &  1872  & 33  & 5\\
(9,  *,  *, 11)&  5376  &  5218  & 158  & 6\\
(9,  *,  *, 12)&  11825  &  11277  & 548  & 6\\
(9,  *,  *, 13)&  20975  &  19666  & 1309  & 6\\
(9,  *,  *, 14)&  31234  &  28821  & 2413  & 5\\
(9,  *,  *, 15)&  39875  &  36105  & 3770  & 6\\
(9,  *,  *, 16)&  44461  &  39436  & 5025  & 5\\
(9,  *,  *, 17)&  43870  &  38007  & 5863  & 6\\
(9,  *,  *, 18)&  38493  &  32492  & 6001  & 5\\
(9,  *,  *, 19)&  30216  &  24741  & 5475  & 5\\
(9,  *,  *, 20)&  21089  &  16747  & 4342  & 4\\
(9,  *,  *, 21)&  13231  &  10069  & 3162  & 4\\
(9,  *,  *, 22)&  7181  &  5306  & 1875  & 3\\
(9,  *,  *, 23)&  3604  &  2468  & 1136  & 3\\
(9,  *,  *, 24)&  1390  &  946  & 444  & 2\\
(9,  *,  *, 25)&  567  &  331  & 236  & 2\\
(9,  *,  *, 26)&  121  &  76  & 45  & 1\\
(9,  *,  *, 27)&  50  &  23  & 27  & 1\\
\hline
$(9,  *,  *, *)$ & 316014 & 274148 & 41866 & 88\\
\hline\hline
 \end{tabular}
 \caption{Combinatorial $3$-spheres and $4$-polytopes with $\leq 9$ vertices, grouped by number of facets.}\label{table:facets}
\end{table}
\begin{table}[!p]\small
\begin{tabular}{c|r|r|r|r}
   $f$-vector &\rotatebox{80}{$3$-spheres} & \rotatebox{80}{$4$-polytopes}& \rotatebox{80}{non-realizable}&\rotatebox{80}{\# of flag $f$-vectors}\\\hline
  (5, 10, 10, 5)&  1 &  1 &  0 &  1\\
\hline
\hline
(6, 13, 13, 6)&  1 &  1 &  0 &  1\\
\hline
(6, 14, 15, 7)&  1 &  1 &  0 &  1\\
\hline
(6, 14, 16, 8)&  1 &  1 &  0 &  1\\
\hline
(6, 15, 18, 9)&  1 &  1 &  0 &  1\\
\hline
\hline
(7, 15, 14, 6)&  1 &  1 &  0 &  1\\
\hline
(7, 16, 16, 7)&  2 &  2 &  0 &  1\\
(7, 17, 17, 7)&  1 &  1 &  0 &  1\\
\hline
(7, 17, 18, 8)&  4 &  4 &  0 &  2\\
(7, 18, 19, 8)&  1 &  1 &  0 &  1\\
\hline
(7, 17, 19, 9)&  1 &  1 &  0 &  1\\
(7, 18, 20, 9)&  6 &  6 &  0 &  2\\
\hline
(7, 18, 21, 10)&  4 &  4 &  0 &  2\\
(7, 19, 22, 10)&  2 &  2 &  0 &  1\\
\hline
(7, 18, 22, 11)&  1 &  1 &  0 &  1\\
(7, 19, 23, 11)&  3 &  3 &  0 &  2\\
\hline
(7, 19, 24, 12)&  2 &  2 &  0 &  1\\
(7, 20, 25, 12)&  1 &  1 &  0 &  1\\
\hline
(7, 20, 26, 13)&  1 &  1 &  0 &  1\\
\hline
(7, 21, 28, 14)&  1 &  1 &  0 &  1\\
\hline
\hline
\end{tabular}\hfill
\begin{tabular}{c|r|r|r|r}
   $f$-vector &\rotatebox{80}{$3$-spheres} & \rotatebox{80}{$4$-polytopes}& \rotatebox{80}{non-realizable}&\rotatebox{80}{\# of flag $f$-vector.}\\\hline
(8, 16, 14, 6)&  1 &  1 &  0 &  1\\
\hline
(8, 18, 17, 7)&  4 &  4 &  0 &  2\\
(8, 19, 18, 7)&  1 &  1 &  0 &  1\\
\hline
(8, 19, 19, 8)&  13 &  13 &  0 &  2\\
(8, 20, 20, 8)&  12 &  12 &  0 &  2\\
(8, 21, 21, 8)&  2 &  2 &  0 &  1\\
\hline
(8, 19, 20, 9)&  1 &  1 &  0 &  1\\
(8, 20, 21, 9)&  31 &  31 &  0 &  2\\
(8, 21, 22, 9)&  37 &  37 &  0 &  3\\
(8, 22, 23, 9)&  7 &  7 &  0 &  2\\
\hline
(8, 20, 22, 10)&  7 &  7 &  0 &  2\\
(8, 21, 23, 10)&  71 &  71 &  0 &  3\\
(8, 22, 24, 10)&  57 &  56 &  1 &  3\\
(8, 23, 25, 10)&  3 &  3 &  0 &  1\\
\hline
(8, 21, 24, 11)&  26 &  26 &  0 &  3\\
(8, 22, 25, 11)&  129 &  128 &  1 &  4\\
(8, 23, 26, 11)&  54 &  51 &  3 &  2\\
\hline
(8, 21, 25, 12)&  4 &  4 &  0 &  1\\
(8, 22, 26, 12)&  75 &  75 &  0 &  4\\
(8, 23, 27, 12)&  133 &  129 &  4 &  3\\
(8, 24, 28, 12)&  19 &  17 &  2 &  1\\
\hline
(8, 22, 27, 13)&  16 &  16 &  0 &  2\\
(8, 23, 28, 13)&  113 &  112 &  1 &  3\\
(8, 24, 29, 13)&  97 &  90 &  7 &  2\\
\hline
(8, 22, 28, 14)&  3 &  3 &  0 &  1\\
(8, 23, 29, 14)&  30 &  30 &  0 &  2\\
(8, 24, 30, 14)&  105 &  103 &  2 &  3\\
(8, 25, 31, 14)&  35 &  30 &  5 &  1\\
\hline
(8, 23, 30, 15)&  5 &  5 &  0 &  1\\
(8, 24, 31, 15)&  39 &  39 &  0 &  2\\
(8, 25, 32, 15)&  78 &  73 &  5 &  2\\
\hline
(8, 24, 32, 16)&  8 &  8 &  0 &  1\\
(8, 25, 33, 16)&  33 &  32 &  1 &  2\\
(8, 26, 34, 16)&  29 &  25 &  4 &  1\\
\hline
(8, 25, 34, 17)&  8 &  8 &  0 &  1\\
(8, 26, 35, 17)&  25 &  23 &  2 &  2\\
\hline
(8, 26, 36, 18)&  6 &  6 &  0 &  1\\
(8, 27, 37, 18)&  10 &  8 &  2 &  1\\
\hline
(8, 27, 38, 19)&  5 &  4 &  1 &  1\\
\hline
(8, 28, 40, 20)&  4 &  3 &  1 &  1\\
\hline
\hline
\end{tabular}  \caption{Combinatorial $3$-spheres and $4$-polytopes with $\leq 8$ vertices, grouped by $f$-vector.}\label{tab:byfv8}
\end{table}
\begin{table}[!p]\small
 \begin{tabular}{c|r|r|r|r}
   $f$-vector &\rotatebox{80}{$3$-spheres} & \rotatebox{80}{$4$-polytopes}& \rotatebox{80}{non-realizable}&\rotatebox{80}{\# of flag $f$-vectors}\\\hline
 (9, 18, 15, 6)&  1 &  1 &  0 &  1\\
\hline
(9, 19, 17, 7)&  1 &  1 &  0 &  1\\
(9, 20, 18, 7)&  6 &  6 &  0 &  2\\
\hline
(9, 20, 19, 8)&  1 &  1 &  0 &  1\\
(9, 21, 20, 8)&  31 &  31 &  0 &  2\\
(9, 22, 21, 8)&  37 &  37 &  0 &  3\\
(9, 23, 22, 8)&  7 &  7 &  0 &  2\\
\hline
(9, 20, 20, 9)&  1 &  1 &  0 &  1\\
(9, 22, 22, 9)&  129 &  129 &  0 &  3\\
(9, 23, 23, 9)&  211 &  209 &  2 &  3\\
(9, 24, 24, 9)&  118 &  116 &  2 &  3\\
(9, 25, 25, 9)&  7 &  7 &  0 &  2\\
(9, 26, 26, 9)&  1 &  1 &  0 &  1\\
\hline
(9, 22, 23, 10)&  12 &  12 &  0 &  3\\
(9, 23, 24, 10)&  398 &  397 &  1 &  4\\
(9, 24, 25, 10)&  904 &  897 &  7 &  4\\
(9, 25, 26, 10)&  524 &  504 &  20 &  4\\
(9, 26, 27, 10)&  67 &  62 &  5 &  3\\
\hline
(9, 23, 25, 11)&  66 &  65 &  1 &  4\\
(9, 24, 26, 11)&  1188 &  1185 &  3 &  4\\
(9, 25, 27, 11)&  2650 &  2593 &  57 &  4\\
(9, 26, 28, 11)&  1344 &  1266 &  78 &  4\\
(9, 27, 29, 11)&  125 &  107 &  18 &  3\\
(9, 28, 30, 11)&  3 &  2 &  1 &  1\\
\hline
(9, 23, 26, 12)&  3 &  3 &  0 &  1\\
(9, 24, 27, 12)&  335 &  333 &  2 &  5\\
(9, 25, 28, 12)&  3275 &  3250 &  25 &  4\\
(9, 26, 29, 12)&  5928 &  5662 &  266 &  5\\
(9, 27, 30, 12)&  2171 &  1943 &  228 &  4\\
(9, 28, 31, 12)&  113 &  86 &  27 &  2\\
\hline
(9, 24, 28, 13)&  33 &  33 &  0 &  2\\
(9, 25, 29, 13)&  1223 &  1219 &  4 &  5\\
(9, 26, 30, 13)&  7677 &  7536 &  141 &  6\\
(9, 27, 31, 13)&  9773 &  9023 &  750 &  5\\
(9, 28, 32, 13)&  2224 &  1829 &  395 &  3\\
(9, 29, 33, 13)&  45 &  26 &  19 &  1\\
\hline
(9, 25, 30, 14)&  205 &  205 &  0 &  3\\
(9, 26, 31, 14)&  3624 &  3608 &  16 &  6\\
(9, 27, 32, 14)&  14312 &  13744 &  568 &  5\\
(9, 28, 33, 14)&  11714 &  10268 &  1446 &  4\\
(9, 29, 34, 14)&  1379 &  996 &  383 &  2\\
\hline
(9, 25, 31, 15)&  15 &  15 &  0 &  1\\
(9, 26, 32, 15)&  771 &  771 &  0 &  4\\
(9, 27, 33, 15)&  7977 &  7878 &  99 &  5\\
(9, 28, 34, 15)&  20764 &  19241 &  1523 &  5\\
(9, 29, 35, 15)&  9961 &  7984 &  1977 &  3\\
(9, 30, 36, 15)&  387 &  216 &  171 &  1\\
\hline
\end{tabular}\hfill
\begin{tabular}{c|r|r|r|r}
   $f$-vector &\rotatebox{80}{$3$-spheres} & \rotatebox{80}{$4$-polytopes}& \rotatebox{80}{non-realizable}&\rotatebox{80}{\# of flag $f$-vectors}\\\hline
(9, 26, 33, 16)&  96 &  96 &  0 &  2\\
(9, 27, 34, 16)&  2038 &  2035 &  3 &  4\\
(9, 28, 35, 16)&  13869 &  13440 &  429 &  5\\
(9, 29, 36, 16)&  22973 &  20057 &  2916 &  4\\
(9, 30, 37, 16)&  5485 &  3808 &  1677 &  2\\
\hline
(9, 26, 34, 17)&  7 &  7 &  0 &  1\\
(9, 27, 35, 17)&  268 &  268 &  0 &  2\\
(9, 28, 36, 17)&  4077 &  4047 &  30 &  4\\
(9, 29, 37, 17)&  19345 &  18090 &  1255 &  4\\
(9, 30, 38, 17)&  18645 &  14763 &  3882 &  3\\
(9, 31, 39, 17)&  1528 &  832 &  696 &  1\\
\hline
(9, 27, 36, 18)&  23 &  23 &  0 &  1\\
(9, 28, 37, 18)&  596 &  596 &  0 &  2\\
(9, 29, 38, 18)&  6671 &  6519 &  152 &  4\\
(9, 30, 39, 18)&  21049 &  18482 &  2567 &  4\\
(9, 31, 40, 18)&  10154 &  6872 &  3282 &  2\\
\hline
(9, 28, 38, 19)&  45 &  45 &  0 &  1\\
(9, 29, 39, 19)&  1061 &  1057 &  4 &  2\\
(9, 30, 40, 19)&  9073 &  8578 &  495 &  4\\
(9, 31, 41, 19)&  17202 &  13559 &  3643 &  3\\
(9, 32, 42, 19)&  2835 &  1502 &  1333 &  1\\
\hline
(9, 29, 40, 20)&  84 &  84 &  0 &  1\\
(9, 30, 41, 20)&  1601 &  1574 &  27 &  2\\
(9, 31, 42, 20)&  9905 &  8793 &  1112 &  3\\
(9, 32, 43, 20)&  9499 &  6296 &  3203 &  2\\
\hline
(9, 30, 42, 21)&  128 &  128 &  0 &  1\\
(9, 31, 43, 21)&  2114 &  2016 &  98 &  2\\
(9, 32, 44, 21)&  8281 &  6536 &  1745 &  3\\
(9, 33, 45, 21)&  2708 &  1389 &  1319 &  1\\
\hline
(9, 31, 44, 22)&  175 &  172 &  3 &  1\\
(9, 32, 45, 22)&  2298 &  2064 &  234 &  2\\
(9, 33, 46, 22)&  4708 &  3070 &  1638 &  2\\
\hline
(9, 32, 46, 23)&  223 &  212 &  11 &  1\\
(9, 33, 47, 23)&  1976 &  1563 &  413 &  2\\
(9, 34, 48, 23)&  1405 &  693 &  712 &  1\\
\hline
(9, 33, 48, 24)&  231 &  209 &  22 &  1\\
(9, 34, 49, 24)&  1159 &  737 &  422 &  2\\
\hline
(9, 34, 50, 25)&  209 &  163 &  46 &  1\\
(9, 35, 51, 25)&  358 &  168 &  190 &  1\\
\hline
(9, 35, 52, 26)&  121 &  76 &  45 &  1\\
\hline
(9, 36, 54, 27)&  50 &  23 &  27 &  1\\
\hline
\hline
\end{tabular} \caption{Combinatorial $3$-spheres and $4$-polytopes with $9$ vertices, grouped by $f$-vector}\label{tab:byfv}
\end{table}

\begin{table}[h!]\footnotesize
\begin{tabular}{c|r|r|r}
  flag $f$-vector &\rotatebox{80}{$3$-spheres} & \rotatebox{80}{$4$-polytopes}& \rotatebox{80}{non-realizable}\\\hline
 $(5,10,10,5;20)$  & $1$ &  $1$ & $0$\\ 
\hline
\hline
$(6,13,13,6;26)$  & $1$ &  $1$ & $0$\\ 
\hline
$(6,14,15,7;29)$  & $1$ &  $1$ & $0$\\ 
\hline
$(6,14,16,8;32)$  & $1$ &  $1$ & $0$\\ 
\hline
$(6,15,18,9;36)$  & $1$ &  $1$ & $0$\\ 
\hline
\hline
$(7,15,14,6;29)$  & $1$ &  $1$ & $0$\\ 
\hline
$(7,16,16,7;32)$  & $2$ &  $2$ & $0$\\ 
\hdashline
$(7,17,17,7;32)$  & $1$ &  $1$ & $0$\\ 
\hline
$(7,17,18,8;35)$  & $3$ &  $3$ & $0$\\ 
$(7,17,18,8;36)$  & $1$ &  $1$ & $0$\\ 
\hdashline
$(7,18,19,8;35)$  & $1$ &  $1$ & $0$\\ 
\hline
$(7,17,19,9;38)$  & $1$ &  $1$ & $0$\\ 
\hdashline
$(7,18,20,9;38)$  & $4$ &  $4$ & $0$\\ 
$(7,18,20,9;39)$  & $2$ &  $2$ & $0$\\ 
\hline
$(7,18,21,10;41)$  & $2$ &  $2$ & $0$\\ 
$(7,18,21,10;42)$  & $2$ &  $2$ & $0$\\ 
\hdashline
$(7,19,22,10;42)$  & $2$ &  $2$ & $0$\\ 
\hline
$(7,18,22,11;44)$  & $1$ &  $1$ & $0$\\ 
\hdashline
$(7,19,23,11;45)$  & $2$ &  $2$ & $0$\\ 
$(7,19,23,11;46)$  & $1$ &  $1$ & $0$\\ 
\hline
$(7,19,24,12;48)$  & $2$ &  $2$ & $0$\\ 
\hdashline
$(7,20,25,12;49)$  & $1$ &  $1$ & $0$\\ 
\hline
$(7,20,26,13;52)$  & $1$ &  $1$ & $0$\\ 
\hline
$(7,21,28,14;56)$  & $1$ &  $1$ & $0$\\ 
\hline
\hline
$(8,16,14,6;32)$  & $1$ &  $1$ & $0$\\ 
\hline
$(8,18,17,7;35)$  & $3$ &  $3$ & $0$\\ 
$(8,18,17,7;36)$  & $1$ &  $1$ & $0$\\ 
\hdashline
$(8,19,18,7;35)$  & $1$ &  $1$ & $0$\\ 
\hline
$(8,19,19,8;38)$  & $12$ &  $12$ & $0$\\ 
$(8,19,19,8;39)$  & $1$ &  $1$ & $0$\\ 
\hdashline
$(8,20,20,8;38)$  & $9$ &  $9$ & $0$\\ 
$(8,20,20,8;39)$  & $3$ &  $3$ & $0$\\ 
\hdashline
$(8,21,21,8;38)$  & $2$ &  $2$ & $0$\\ 
\hline
$(8,19,20,9;41)$  & $1$ &  $1$ & $0$\\ 
\hdashline
$(8,20,21,9;41)$  & $23$ &  $23$ & $0$\\ 
$(8,20,21,9;42)$  & $8$ &  $8$ & $0$\\ 
\hdashline
$(8,21,22,9;41)$  & $20$ &  $20$ & $0$\\ 
$(8,21,22,9;42)$  & $16$ &  $16$ & $0$\\ 
$(8,21,22,9;43)$  & $1$ &  $1$ & $0$\\ 
\hdashline
$(8,22,23,9;41)$  & $5$ &  $5$ & $0$\\ 
$(8,22,23,9;42)$  & $2$ &  $2$ & $0$\\ 
\hline
$(8,20,22,10;44)$  & $6$ &  $6$ & $0$\\ 
$(8,20,22,10;45)$  & $1$ &  $1$ & $0$\\ 
\hdashline
$(8,21,23,10;44)$  & $41$ &  $41$ & $0$\\ 
$(8,21,23,10;45)$  & $23$ &  $23$ & $0$\\ 
$(8,21,23,10;46)$  & $7$ &  $7$ & $0$\\ 
\hdashline
$(8,22,24,10;44)$  & $20$ &  $20$ & $0$\\ 
$(8,22,24,10;45)$  & $35$ &  $35$ & $0$\\ 
$(8,22,24,10;46)$  & $2$ &  $1$ & $1$\\ 
\hdashline
$(8,23,25,10;45)$  & $3$ &  $3$ & $0$\\ 
\hline
\end{tabular}\hfill
\begin{tabular}{c|r|r|r}
   flag $f$-vector &\rotatebox{80}{$3$-spheres} & \rotatebox{80}{$4$-polytopes}& \rotatebox{80}{non-realizable}\\\hline
$(8,21,24,11;47)$  & $17$ &  $17$ & $0$\\ 
$(8,21,24,11;48)$  & $8$ &  $8$ & $0$\\ 
$(8,21,24,11;49)$  & $1$ &  $1$ & $0$\\ 
\hdashline
$(8,22,25,11;47)$  & $38$ &  $38$ & $0$\\ 
$(8,22,25,11;48)$  & $62$ &  $62$ & $0$\\ 
$(8,22,25,11;49)$  & $28$ &  $27$ & $1$\\ 
$(8,22,25,11;50)$  & $1$ &  $1$ & $0$\\ 
\hdashline
$(8,23,26,11;48)$  & $40$ &  $40$ & $0$\\ 
$(8,23,26,11;49)$  & $14$ &  $11$ & $3$\\ 
\hline
$(8,21,25,12;50)$  & $4$ &  $4$ & $0$\\ 
\hdashline
$(8,22,26,12;50)$  & $25$ &  $25$ & $0$\\ 
$(8,22,26,12;51)$  & $32$ &  $32$ & $0$\\ 
$(8,22,26,12;52)$  & $17$ &  $17$ & $0$\\ 
$(8,22,26,12;53)$  & $1$ &  $1$ & $0$\\ 
\hdashline
$(8,23,27,12;51)$  & $58$ &  $58$ & $0$\\ 
$(8,23,27,12;52)$  & $70$ &  $68$ & $2$\\ 
$(8,23,27,12;53)$  & $5$ &  $3$ & $2$\\ 
\hdashline
$(8,24,28,12;52)$  & $19$ &  $17$ & $2$\\ 
\hline
$(8,22,27,13;53)$  & $9$ &  $9$ & $0$\\ 
$(8,22,27,13;54)$  & $7$ &  $7$ & $0$\\ 
\hdashline
$(8,23,28,13;54)$  & $50$ &  $50$ & $0$\\ 
$(8,23,28,13;55)$  & $51$ &  $51$ & $0$\\ 
$(8,23,28,13;56)$  & $12$ &  $11$ & $1$\\ 
\hdashline
$(8,24,29,13;55)$  & $71$ &  $69$ & $2$\\ 
$(8,24,29,13;56)$  & $26$ &  $21$ & $5$\\ 
\hline
$(8,22,28,14;56)$  & $3$ &  $3$ & $0$\\ 
\hdashline
$(8,23,29,14;57)$  & $16$ &  $16$ & $0$\\ 
$(8,23,29,14;58)$  & $14$ &  $14$ & $0$\\ 
\hdashline
$(8,24,30,14;58)$  & $63$ &  $63$ & $0$\\ 
$(8,24,30,14;59)$  & $38$ &  $37$ & $1$\\ 
$(8,24,30,14;60)$  & $4$ &  $3$ & $1$\\ 
\hdashline
$(8,25,31,14;59)$  & $35$ &  $30$ & $5$\\ 
\hline
$(8,23,30,15;60)$  & $5$ &  $5$ & $0$\\ 
\hdashline
$(8,24,31,15;61)$  & $26$ &  $26$ & $0$\\ 
$(8,24,31,15;62)$  & $13$ &  $13$ & $0$\\ 
\hdashline
$(8,25,32,15;62)$  & $61$ &  $59$ & $2$\\ 
$(8,25,32,15;63)$  & $17$ &  $14$ & $3$\\ 
\hline
$(8,24,32,16;64)$  & $8$ &  $8$ & $0$\\ 
\hdashline
$(8,25,33,16;65)$  & $24$ &  $24$ & $0$\\ 
$(8,25,33,16;66)$  & $9$ &  $8$ & $1$\\ 
\hdashline
$(8,26,34,16;66)$  & $29$ &  $25$ & $4$\\ 
\hline
$(8,25,34,17;68)$  & $8$ &  $8$ & $0$\\ 
\hdashline
$(8,26,35,17;69)$  & $20$ &  $19$ & $1$\\ 
$(8,26,35,17;70)$  & $5$ &  $4$ & $1$\\ 
\hline
$(8,26,36,18;72)$  & $6$ &  $6$ & $0$\\ 
\hdashline
$(8,27,37,18;73)$  & $10$ &  $8$ & $2$\\ 
\hline
$(8,27,38,19;76)$  & $5$ &  $4$ & $1$\\ 
\hline
$(8,28,40,20;80)$  & $4$ &  $3$ & $1$\\ 
\hline
\hline
\end{tabular}\caption{Combinatorial $3$-spheres and $4$-polytopes with $\leq 8$ vertices, grouped by flag $f$-vector.}\label{tab:byflfv8}
\end{table}
\begin{table}[!p]\scriptsize
\begin{tabular}{c|r|r|r}
  flag $f$-vector &\rotatebox{80}{$3$-spheres} & \rotatebox{80}{$4$-polytopes}& \rotatebox{80}{non-realizable}\\\hline
$(9,18,15,6;36)$  & $1$ &  $1$ & $0$\\ 
\hline
$(9,19,17,7;38)$  & $1$ &  $1$ & $0$\\ 
\hdashline
$(9,20,18,7;38)$  & $4$ &  $4$ & $0$\\ 
$(9,20,18,7;39)$  & $2$ &  $2$ & $0$\\ 
\hline
$(9,20,19,8;41)$  & $1$ &  $1$ & $0$\\ 
\hdashline
$(9,21,20,8;41)$  & $23$ &  $23$ & $0$\\ 
$(9,21,20,8;42)$  & $8$ &  $8$ & $0$\\ 
\hdashline
$(9,22,21,8;41)$  & $20$ &  $20$ & $0$\\ 
$(9,22,21,8;42)$  & $16$ &  $16$ & $0$\\ 
$(9,22,21,8;43)$  & $1$ &  $1$ & $0$\\ 
\hdashline
$(9,23,22,8;41)$  & $5$ &  $5$ & $0$\\ 
$(9,23,22,8;42)$  & $2$ &  $2$ & $0$\\ 
\hline
$(9,20,20,9;44)$  & $1$ &  $1$ & $0$\\ 
\hdashline
$(9,22,22,9;44)$  & $93$ &  $93$ & $0$\\ 
$(9,22,22,9;45)$  & $32$ &  $32$ & $0$\\ 
$(9,22,22,9;46)$  & $4$ &  $4$ & $0$\\ 
\hdashline
$(9,23,23,9;44)$  & $111$ &  $111$ & $0$\\ 
$(9,23,23,9;45)$  & $90$ &  $90$ & $0$\\ 
$(9,23,23,9;46)$  & $10$ &  $8$ & $2$\\ 
\hdashline
$(9,24,24,9;44)$  & $51$ &  $51$ & $0$\\ 
$(9,24,24,9;45)$  & $63$ &  $63$ & $0$\\ 
$(9,24,24,9;46)$  & $4$ &  $2$ & $2$\\ 
\hdashline
$(9,25,25,9;44)$  & $5$ &  $5$ & $0$\\ 
$(9,25,25,9;45)$  & $2$ &  $2$ & $0$\\ 
\hdashline
$(9,26,26,9;44)$  & $1$ &  $1$ & $0$\\ 
\hline
$(9,22,23,10;47)$  & $8$ &  $8$ & $0$\\ 
$(9,22,23,10;48)$  & $3$ &  $3$ & $0$\\ 
$(9,22,23,10;49)$  & $1$ &  $1$ & $0$\\ 
\hdashline
$(9,23,24,10;47)$  & $242$ &  $242$ & $0$\\ 
$(9,23,24,10;48)$  & $122$ &  $122$ & $0$\\ 
$(9,23,24,10;49)$  & $33$ &  $32$ & $1$\\ 
$(9,23,24,10;50)$  & $1$ &  $1$ & $0$\\ 
\hdashline
$(9,24,25,10;47)$  & $347$ &  $347$ & $0$\\ 
$(9,24,25,10;48)$  & $427$ &  $427$ & $0$\\ 
$(9,24,25,10;49)$  & $128$ &  $121$ & $7$\\ 
$(9,24,25,10;50)$  & $2$ &  $2$ & $0$\\ 
\hdashline
$(9,25,26,10;47)$  & $145$ &  $145$ & $0$\\ 
$(9,25,26,10;48)$  & $311$ &  $311$ & $0$\\ 
$(9,25,26,10;49)$  & $67$ &  $48$ & $19$\\ 
$(9,25,26,10;50)$  & $1$ &  $0$ & $1$\\  
\hdashline
$(9,26,27,10;47)$  & $16$ &  $16$ & $0$\\ 
$(9,26,27,10;48)$  & $42$ &  $42$ & $0$\\ 
$(9,26,27,10;49)$  & $9$ &  $4$ & $5$\\ 
\hline
$(9,23,25,11;50)$  & $51$ &  $51$ & $0$\\ 
$(9,23,25,11;51)$  & $11$ &  $11$ & $0$\\ 
$(9,23,25,11;52)$  & $3$ &  $2$ & $1$\\ 
$(9,23,25,11;53)$  & $1$ &  $1$ & $0$\\ 
\hdashline
$(9,24,26,11;50)$  & $548$ &  $548$ & $0$\\ 
$(9,24,26,11;51)$  & $431$ &  $431$ & $0$\\ 
$(9,24,26,11;52)$  & $196$ &  $194$ & $2$\\ 
$(9,24,26,11;53)$  & $13$ &  $12$ & $1$\\ 
\hdashline
$(9,25,27,11;50)$  & $587$ &  $587$ & $0$\\ 
$(9,25,27,11;51)$  & $1230$ &  $1230$ & $0$\\ 
$(9,25,27,11;52)$  & $777$ &  $741$ & $36$\\ 
$(9,25,27,11;53)$  & $56$ &  $35$ & $21$\\ 
\hdashline
$(9,26,28,11;50)$  & $161$ &  $161$ & $0$\\ 
$(9,26,28,11;51)$  & $715$ &  $715$ & $0$\\ 
$(9,26,28,11;52)$  & $442$ &  $381$ & $61$\\ 
$(9,26,28,11;53)$  & $26$ &  $9$ & $17$\\ 
\hdashline
$(9,27,29,11;51)$  & $70$ &  $70$ & $0$\\ 
$(9,27,29,11;52)$  & $53$ &  $37$ & $16$\\ 
$(9,27,29,11;53)$  & $2$ &  $0$ & $2$\\  
\hdashline
$(9,28,30,11;52)$  & $3$ &  $2$ & $1$\\ 
\hline
\end{tabular}\hfill
\begin{tabular}{c|r|r|r}
   flag $f$-vector &\rotatebox{80}{$3$-spheres} & \rotatebox{80}{$4$-polytopes}& \rotatebox{80}{non-realizable}\\\hline
$(9,23,26,12;53)$  & $3$ &  $3$ & $0$\\ 
\hdashline
$(9,24,27,12;53)$  & $200$ &  $200$ & $0$\\ 
$(9,24,27,12;54)$  & $104$ &  $104$ & $0$\\ 
$(9,24,27,12;55)$  & $25$ &  $24$ & $1$\\ 
$(9,24,27,12;56)$  & $5$ &  $4$ & $1$\\ 
$(9,24,27,12;57)$  & $1$ &  $1$ & $0$\\ 
\hdashline
$(9,25,28,12;53)$  & $834$ &  $834$ & $0$\\ 
$(9,25,28,12;54)$  & $1319$ &  $1319$ & $0$\\ 
$(9,25,28,12;55)$  & $938$ &  $927$ & $11$\\ 
$(9,25,28,12;56)$  & $184$ &  $170$ & $14$\\ 
\hdashline
$(9,26,29,12;53)$  & $487$ &  $487$ & $0$\\ 
$(9,26,29,12;54)$  & $2264$ &  $2264$ & $0$\\ 
$(9,26,29,12;55)$  & $2589$ &  $2496$ & $93$\\ 
$(9,26,29,12;56)$  & $586$ &  $414$ & $172$\\ 
$(9,26,29,12;57)$  & $2$ &  $1$ & $1$\\ 
\hdashline
$(9,27,30,12;54)$  & $692$ &  $692$ & $0$\\ 
$(9,27,30,12;55)$  & $1219$ &  $1121$ & $98$\\ 
$(9,27,30,12;56)$  & $258$ &  $130$ & $128$\\ 
$(9,27,30,12;57)$  & $2$ &  $0$ & $2$\\  
\hdashline
$(9,28,31,12;55)$  & $97$ &  $81$ & $16$\\ 
$(9,28,31,12;56)$  & $16$ &  $5$ & $11$\\ 
\hline
$(9,24,28,13;56)$  & $29$ &  $29$ & $0$\\ 
$(9,24,28,13;57)$  & $4$ &  $4$ & $0$\\ 
\hdashline
$(9,25,29,13;56)$  & $456$ &  $456$ & $0$\\ 
$(9,25,29,13;57)$  & $494$ &  $494$ & $0$\\ 
$(9,25,29,13;58)$  & $232$ &  $231$ & $1$\\ 
$(9,25,29,13;59)$  & $39$ &  $37$ & $2$\\ 
$(9,25,29,13;60)$  & $2$ &  $1$ & $1$\\ 
\hdashline
$(9,26,30,13;56)$  & $683$ &  $683$ & $0$\\ 
$(9,26,30,13;57)$  & $2610$ &  $2610$ & $0$\\ 
$(9,26,30,13;58)$  & $3097$ &  $3063$ & $34$\\ 
$(9,26,30,13;59)$  & $1229$ &  $1134$ & $95$\\ 
$(9,26,30,13;60)$  & $57$ &  $45$ & $12$\\ 
$(9,26,30,13;61)$  & $1$ &  $1$ & $0$\\ 
\hdashline
$(9,27,31,13;57)$  & $1907$ &  $1907$ & $0$\\ 
$(9,27,31,13;58)$  & $4990$ &  $4857$ & $133$\\ 
$(9,27,31,13;59)$  & $2733$ &  $2192$ & $541$\\ 
$(9,27,31,13;60)$  & $141$ &  $66$ & $75$\\ 
$(9,27,31,13;61)$  & $2$ &  $1$ & $1$\\ 
\hdashline
$(9,28,32,13;58)$  & $1252$ &  $1187$ & $65$\\ 
$(9,28,32,13;59)$  & $934$ &  $633$ & $301$\\ 
$(9,28,32,13;60)$  & $38$ &  $9$ & $29$\\ 
\hdashline
$(9,29,33,13;59)$  & $45$ &  $26$ & $19$\\ 
\hline
$(9,25,30,14;59)$  & $122$ &  $122$ & $0$\\ 
$(9,25,30,14;60)$  & $74$ &  $74$ & $0$\\ 
$(9,25,30,14;61)$  & $9$ &  $9$ & $0$\\ 
\hdashline
$(9,26,31,14;59)$  & $466$ &  $466$ & $0$\\ 
$(9,26,31,14;60)$  & $1451$ &  $1451$ & $0$\\ 
$(9,26,31,14;61)$  & $1235$ &  $1232$ & $3$\\ 
$(9,26,31,14;62)$  & $439$ &  $430$ & $9$\\ 
$(9,26,31,14;63)$  & $32$ &  $28$ & $4$\\ 
$(9,26,31,14;64)$  & $1$ &  $1$ & $0$\\ 
\hdashline
$(9,27,32,14;60)$  & $2441$ &  $2441$ & $0$\\ 
$(9,27,32,14;61)$  & $6294$ &  $6236$ & $58$\\ 
$(9,27,32,14;62)$  & $4827$ &  $4500$ & $327$\\ 
$(9,27,32,14;63)$  & $729$ &  $556$ & $173$\\ 
$(9,27,32,14;64)$  & $21$ &  $11$ & $10$\\ 
\hdashline
$(9,28,33,14;61)$  & $4225$ &  $4143$ & $82$\\ 
$(9,28,33,14;62)$  & $6252$ &  $5423$ & $829$\\ 
$(9,28,33,14;63)$  & $1208$ &  $698$ & $510$\\ 
$(9,28,33,14;64)$  & $29$ &  $4$ & $25$\\ 
\hdashline
$(9,29,34,14;62)$  & $1158$ &  $911$ & $247$\\ 
$(9,29,34,14;63)$  & $221$ &  $85$ & $136$\\ 
\hline
\end{tabular}\caption{Combinatorial $3$-spheres and $4$-polytopes with $9$ vertices, grouped by flag $f$-vector.}\label{tab:flag}
\end{table}
\begin{table}[p]\scriptsize
\begin{tabular}{c|r|r|r}
   $f$-vector &\rotatebox{80}{$3$-spheres} & \rotatebox{80}{$4$-polytopes}& \rotatebox{80}{non-realizable}\\\hline
$(9,25,31,15;62)$  & $15$ &  $15$ & $0$\\ 
\hdashline
$(9,26,32,15;62)$  & $188$ &  $188$ & $0$\\ 
$(9,26,32,15;63)$  & $394$ &  $394$ & $0$\\ 
$(9,26,32,15;64)$  & $174$ &  $174$ & $0$\\ 
$(9,26,32,15;65)$  & $15$ &  $15$ & $0$\\ 
\hdashline
$(9,27,33,15;63)$  & $1675$ &  $1675$ & $0$\\ 
$(9,27,33,15;64)$  & $3533$ &  $3525$ & $8$\\ 
$(9,27,33,15;65)$  & $2303$ &  $2254$ & $49$\\ 
$(9,27,33,15;66)$  & $457$ &  $416$ & $41$\\ 
$(9,27,33,15;67)$  & $9$ &  $8$ & $1$\\ 
\hdashline
$(9,28,34,15;64)$  & $5811$ &  $5769$ & $42$\\ 
$(9,28,34,15;65)$  & $10468$ &  $9938$ & $530$\\ 
$(9,28,34,15;66)$  & $4167$ &  $3358$ & $809$\\ 
$(9,28,34,15;67)$  & $315$ &  $175$ & $140$\\ 
$(9,28,34,15;68)$  & $3$ &  $1$ & $2$\\ 
\hdashline
$(9,29,35,15;65)$  & $5763$ &  $5221$ & $542$\\ 
$(9,29,35,15;66)$  & $3911$ &  $2683$ & $1228$\\ 
$(9,29,35,15;67)$  & $287$ &  $80$ & $207$\\ 
\hdashline
$(9,30,36,15;66)$  & $387$ &  $216$ & $171$\\ 
\hline
$(9,26,33,16;65)$  & $42$ &  $42$ & $0$\\ 
$(9,26,33,16;66)$  & $54$ &  $54$ & $0$\\ 
\hdashline
$(9,27,34,16;66)$  & $679$ &  $679$ & $0$\\ 
$(9,27,34,16;67)$  & $961$ &  $961$ & $0$\\ 
$(9,27,34,16;68)$  & $380$ &  $377$ & $3$\\ 
$(9,27,34,16;69)$  & $18$ &  $18$ & $0$\\ 
\hdashline
$(9,28,35,16;67)$  & $4071$ &  $4063$ & $8$\\ 
$(9,28,35,16;68)$  & $6584$ &  $6459$ & $125$\\ 
$(9,28,35,16;69)$  & $2918$ &  $2684$ & $234$\\ 
$(9,28,35,16;70)$  & $292$ &  $232$ & $60$\\ 
$(9,28,35,16;71)$  & $4$ &  $2$ & $2$\\ 
\hdashline
$(9,29,36,16;68)$  & $9767$ &  $9394$ & $373$\\ 
$(9,29,36,16;69)$  & $10885$ &  $9252$ & $1633$\\ 
$(9,29,36,16;70)$  & $2267$ &  $1401$ & $866$\\ 
$(9,29,36,16;71)$  & $54$ &  $10$ & $44$\\ 
\hdashline
$(9,30,37,16;69)$  & $4369$ &  $3332$ & $1037$\\ 
$(9,30,37,16;70)$  & $1116$ &  $476$ & $640$\\ 
\hline
$(9,26,34,17;68)$  & $7$ &  $7$ & $0$\\ 
\hdashline
$(9,27,35,17;69)$  & $153$ &  $153$ & $0$\\ 
$(9,27,35,17;70)$  & $115$ &  $115$ & $0$\\ 
\hdashline
$(9,28,36,17;70)$  & $1635$ &  $1635$ & $0$\\ 
$(9,28,36,17;71)$  & $1886$ &  $1875$ & $11$\\ 
$(9,28,36,17;72)$  & $536$ &  $519$ & $17$\\ 
$(9,28,36,17;73)$  & $20$ &  $18$ & $2$\\ 
\hdashline
$(9,29,37,17;71)$  & $7560$ &  $7445$ & $115$\\ 
$(9,29,37,17;72)$  & $9095$ &  $8500$ & $595$\\ 
$(9,29,37,17;73)$  & $2540$ &  $2054$ & $486$\\ 
$(9,29,37,17;74)$  & $150$ &  $91$ & $59$\\ 
\hdashline
$(9,30,38,17;72)$  & $11070$ &  $9780$ & $1290$\\ 
$(9,30,38,17;73)$  & $7007$ &  $4805$ & $2202$\\ 
$(9,30,38,17;74)$  & $568$ &  $178$ & $390$\\ 
\hdashline
$(9,31,39,17;73)$  & $1528$ &  $832$ & $696$\\ 
\hline
\end{tabular}\hfill
\begin{tabular}{c|r|r|r}
   flag $f$-vector &\rotatebox{80}{$3$-spheres} & \rotatebox{80}{$4$-polytopes}& \rotatebox{80}{non-realizable}\\\hline
$(9,27,36,18;72)$  & $23$ &  $23$ & $0$\\ 
\hdashline
$(9,28,37,18;73)$  & $355$ &  $355$ & $0$\\ 
$(9,28,37,18;74)$  & $241$ &  $241$ & $0$\\ 
\hdashline
$(9,29,38,18;74)$  & $3144$ &  $3130$ & $14$\\ 
$(9,29,38,18;75)$  & $2893$ &  $2815$ & $78$\\ 
$(9,29,38,18;76)$  & $624$ &  $565$ & $59$\\ 
$(9,29,38,18;77)$  & $10$ &  $9$ & $1$\\ 
\hdashline
$(9,30,39,18;75)$  & $10603$ &  $10059$ & $544$\\ 
$(9,30,39,18;76)$  & $8925$ &  $7477$ & $1448$\\ 
$(9,30,39,18;77)$  & $1491$ &  $938$ & $553$\\ 
$(9,30,39,18;78)$  & $30$ &  $8$ & $22$\\ 
\hdashline
$(9,31,40,18;76)$  & $7996$ &  $5923$ & $2073$\\ 
$(9,31,40,18;77)$  & $2158$ &  $949$ & $1209$\\ 
\hline
$(9,28,38,19;76)$  & $45$ &  $45$ & $0$\\ 
\hdashline
$(9,29,39,19;77)$  & $697$ &  $697$ & $0$\\ 
$(9,29,39,19;78)$  & $364$ &  $360$ & $4$\\ 
\hdashline
$(9,30,40,19;78)$  & $4908$ &  $4797$ & $111$\\ 
$(9,30,40,19;79)$  & $3603$ &  $3331$ & $272$\\ 
$(9,30,40,19;80)$  & $557$ &  $447$ & $110$\\ 
$(9,30,40,19;81)$  & $5$ &  $3$ & $2$\\ 
\hdashline
$(9,31,41,19;79)$  & $11005$ &  $9505$ & $1500$\\ 
$(9,31,41,19;80)$  & $5791$ &  $3910$ & $1881$\\ 
$(9,31,41,19;81)$  & $406$ &  $144$ & $262$\\ 
\hdashline
$(9,32,42,19;80)$  & $2835$ &  $1502$ & $1333$\\ 
\hline
$(9,29,40,20;80)$  & $84$ &  $84$ & $0$\\ 
\hdashline
$(9,30,41,20;81)$  & $1111$ &  $1103$ & $8$\\ 
$(9,30,41,20;82)$  & $490$ &  $471$ & $19$\\ 
\hdashline
$(9,31,42,20;82)$  & $6145$ &  $5753$ & $392$\\ 
$(9,31,42,20;83)$  & $3432$ &  $2835$ & $597$\\ 
$(9,31,42,20;84)$  & $328$ &  $205$ & $123$\\ 
\hdashline
$(9,32,43,20;83)$  & $7617$ &  $5458$ & $2159$\\ 
$(9,32,43,20;84)$  & $1882$ &  $838$ & $1044$\\ 
\hline
$(9,30,42,21;84)$  & $128$ &  $128$ & $0$\\ 
\hdashline
$(9,31,43,21;85)$  & $1558$ &  $1509$ & $49$\\ 
$(9,31,43,21;86)$  & $556$ &  $507$ & $49$\\ 
\hdashline
$(9,32,44,21;86)$  & $5986$ &  $5057$ & $929$\\ 
$(9,32,44,21;87)$  & $2189$ &  $1436$ & $753$\\ 
$(9,32,44,21;88)$  & $106$ &  $43$ & $63$\\ 
\hdashline
$(9,33,45,21;87)$  & $2708$ &  $1389$ & $1319$\\ 
\hline
$(9,31,44,22;88)$  & $175$ &  $172$ & $3$\\ 
\hdashline
$(9,32,45,22;89)$  & $1781$ &  $1649$ & $132$\\ 
$(9,32,45,22;90)$  & $517$ &  $415$ & $102$\\ 
\hdashline
$(9,33,46,22;90)$  & $3974$ &  $2742$ & $1232$\\ 
$(9,33,46,22;91)$  & $734$ &  $328$ & $406$\\ 
\hline
$(9,32,46,23;92)$  & $223$ &  $212$ & $11$\\ 
\hdashline
$(9,33,47,23;93)$  & $1657$ &  $1362$ & $295$\\ 
$(9,33,47,23;94)$  & $319$ &  $201$ & $118$\\ 
\hdashline
$(9,34,48,23;94)$  & $1405$ &  $693$ & $712$\\ 
\hline
$(9,33,48,24;96)$  & $231$ &  $209$ & $22$\\ 
\hdashline
$(9,34,49,24;97)$  & $1047$ &  $689$ & $358$\\ 
$(9,34,49,24;98)$  & $112$ &  $48$ & $64$\\ 
\hline
$(9,34,50,25;100)$  & $209$ &  $163$ & $46$\\ 
\hdashline
$(9,35,51,25;101)$  & $358$ &  $168$ & $190$\\ 
\hline
$(9,35,52,26;104)$  & $121$ &  $76$ & $45$\\ 
\hline
$(9,36,54,27;108)$  & $50$ &  $23$ & $27$\\ 
\hline
\hline
\end{tabular}\caption*{Combinatorial $3$-spheres and $4$-polytopes with $9$ vertices, grouped by flag $f$-vector (continued).}
\end{table}

\newpage
\pagebreak

 \section*{Acknowledgments}
 I am very grateful to Günter M.~Ziegler for insightful discussions and suggestions. 
 \bibliography{lit}

\newcommand{\etalchar}[1]{$^{#1}$}
\begin{thebibliography}{BLVS{\etalchar{+}}99}

\bibitem[ABS80]{ABS80}
Amos Altshuler, J{\"u}rgen Bokowski, and Leon Steinberg.
\newblock The classification of simplicial $3$-spheres with nine vertices into
  polytopes and nonpolytopes.
\newblock {\em Discrete Mathematics}, 31(2):115--124, 1980.

\bibitem[Alt77]{A77}
Amos Altshuler.
\newblock Neighborly 4-polytopes and neighborly combinatorial {3-manifolds}
  with ten vertices.
\newblock {\em Canadian Journal of Mathematics}, 29(225):400--420, 1977.

\bibitem[AS73]{AS73}
Amos Altshuler and Leon Steinberg.
\newblock Neighborly 4-polytopes with 9 vertices.
\newblock {\em Journal of Combinatorial Theory, Series A}, 15(3):270--287,
  1973.

\bibitem[AS74]{AS74}
Amos Altshuler and Leon Steinberg.
\newblock Neighborly combinatorial 3-manifolds with 9 vertices.
\newblock {\em Discrete Mathematics}, 8(2):113--137, 1974.

\bibitem[AS76]{AS76}
Amos Altshuler and Leon Steinberg.
\newblock An enumeration of combinatorial $3$-manifolds with nine vertices.
\newblock {\em Discrete Mathematics}, 16(2):91--108, 1976.

\bibitem[AS84]{AS84}
Amos Altshuler and Leon Steinberg.
\newblock Enumeration of the quasisimplicial 3-spheres and 4-polytopes with
  eight vertices.
\newblock {\em Pacific journal of mathematics}, 113(2):269--288, 1984.

\bibitem[AS85]{AS85}
Amos Altshuler and Leon Steinberg.
\newblock The complete enumeration of the 4-polytopes and 3-spheres with eight
  vertices.
\newblock {\em Pacific Journal of Mathematics}, 117(1):1--16, 1985.

\bibitem[Bar73]{B73}
David Barnette.
\newblock The triangulations of the 3-sphere with up to 8 vertices.
\newblock {\em Journal of Combinatorial Theory, Series A}, 14(1):37--52, 1973.

\bibitem[Bay17]{Ba17}
Margaret~M. Bayer.
\newblock {Graphs, Skeleta and Reconstruction of Polytopes}.
\newblock {\em Preprint, 16 pages, to appear in Acta Mathematica Hungarica},
  2017.
\newblock \href{https://arxiv.org/abs/1710.00118v2}{arXiv:1710.00118v2}.

\bibitem[BLVS{\etalchar{+}}99]{BVSWZ99}
Anders Bj{\"o}rner, Michel Las~Vergnas, Bernd Sturmfels, Neil White, and
  G{\"u}nter~M. Ziegler.
\newblock {\em Oriented matroids}, volume~46 of {\em Encyclopedia of
  Mathematics and its Applications}.
\newblock Cambridge University Press, Cambridge, 2nd edition, 1999.

\bibitem[BS87]{BSt87}
J{\"u}rgen Bokowski and Bernd Sturmfels.
\newblock Polytopal and nonpolytopal spheres an algorithmic approach.
\newblock {\em Israel Journal of Mathematics}, 57(3):257--271, 1987.

\bibitem[BS89]{BS89}
J{\"u}rgen Bokowski and Bernd Sturmfels.
\newblock {\em Computational synthetic geometry}, volume 1355 of {\em Lecture
  Notes in Mathematics}.
\newblock Springer, 1989.

\bibitem[BW88]{B88}
Edward~A. Bender and Nicholas~C. Wormald.
\newblock The number of rooted convex polyhedra.
\newblock {\em Canadian Mathematical Bulletin}, 31(1):99--102, 1988.

\bibitem[BZ17a]{BZ15}
Philip Brinkmann and Günter~M. Ziegler.
\newblock {A flag vector of a 3-sphere that is not the flag vector of a
  4-polytope}.
\newblock {\em Mathematika}, 63(1):260–271, 2017.

\bibitem[BZ17b]{BZ17}
Philip {Brinkmann} and Günter~M. {Ziegler}.
\newblock {Small $f$-fectors of 3-spheres and of 4-polytopes}.
\newblock {\em preprint, 19 pages, to appear in Mathematics of Computation},
  2017.
\newblock \url{https://arxiv.org/abs/1610.01028v1}.

\bibitem[Eng82]{E82}
Peter Engel.
\newblock On the enumeration of polyhedra.
\newblock {\em Discrete Mathematics}, 41(2):215--218, 1982.

\bibitem[Eng91]{E1991}
Peter Engel.
\newblock The enumeration of four-dimensional polytopes.
\newblock {\em Discrete mathematics}, 91(1):9--31, 1991.

\bibitem[Esp14]{E14}
William Espenschied.
\newblock {\em Graphs of Polytopes}.
\newblock PhD thesis, University of Kansas, 2014.
\newblock \url{http://hdl.handle.net/1808/18668}.

\bibitem[Fir17]{F17}
Moritz Firsching.
\newblock Realizability and inscribability for simplicial polytopes via
  nonlinear optimization.
\newblock {\em Mathematical Programming}, 166(1):273–295, 2017.

\bibitem[Fus06]{F06}
{\'E}ric Fusy.
\newblock Counting $d$-polytopes with $d+ 3$ vertices.
\newblock {\em The Electronic Journal of Combinatorics}, 13(1):1--25, 2006.

\bibitem[Gr{\"u}67]{G67}
Branko Gr{\"u}nbaum.
\newblock {\em Convex Polytopes}.
\newblock Wiley, 1967.

\bibitem[GS67]{GS67}
Branko Gr{\"u}nbaum and Vadakekkara~Pullarote Sreedharan.
\newblock An enumeration of simplicial 4-polytopes with 8 vertices.
\newblock {\em Journal of Combinatorial Theory}, 2(4):437--465, 1967.

\bibitem[JK15]{bliss}
Tommi Junttila and Petteri Kaski.
\newblock {bliss: A Tool for Computing Automorphism Groups and Canonical
  Labelings of Graphs}: version 0.73, 2015.
\newblock \url{http://www.tcs.hut.fi/Software/bliss/}.

\bibitem[Lut]{L}
Frank~H. Lutz.
\newblock {\em 3-Manifolds}.
\newblock \url{http://page.math.tu-berlin.de/~lutz/stellar/3-manifolds.html}.

\bibitem[Lut08]{L08}
Frank~H. Lutz.
\newblock Combinatorial 3-manifolds with 10 vertices.
\newblock {\em Beitr{\"a}ge zur Algebra und Geometrie}, 49(1):97--106, 2008.

\bibitem[RG06]{RG06}
J{\"u}rgen Richter-Gebert.
\newblock {\em Realization spaces of polytopes}.
\newblock Springer, 2006.

\bibitem[RGZ95]{RZ95}
J{\"u}rgen Richter-Gebert and G{\"u}nter~M. Ziegler.
\newblock Realization spaces of $4$-polytopes are universal.
\newblock {\em Bulletin of the American Mathematical Society}, 32(4):403--412,
  1995.

\bibitem[RGZ04]{RGZ04}
J{\"u}rgen Richter-Gebert and G{\"u}nter~M. Ziegler.
\newblock Oriented matroids.
\newblock In Jacob~E. Goodman and Joseph O'Rourke, editors, {\em Handbook of
  Discrete and Computational Geometry}, chapter~6. CRC press, 2004.

\bibitem[RW82]{RW82}
L.~Bruce Richmond and Nicholas~C. Wormald.
\newblock The asymptotic number of convex polyhedra.
\newblock {\em Transactions of the American Mathematical Society}, pages
  721--735, 1982.

\bibitem[SD18]{sage}
The Sage~Developers.
\newblock {\em {S}ageMath, the {S}age {M}athematics {S}oftware {S}ystem
  ({V}ersion~8.1)}, 2018.
\newblock \url{http://www.sagemath.org}.

\bibitem[SL09]{SL09}
Thom Sulanke and Frank~H. Lutz.
\newblock Isomorphism-free lexicographic enumeration of triangulated surfaces
  and 3-manifolds.
\newblock {\em European Journal of Combinatorics}, 30(8):1965--1979, 2009.

\bibitem[Slo]{sloane}
Neil J.~A. Sloane.
\newblock {\em The On-Line Encyclopedia of Integer Sequences}.
\newblock \url{http://oeis.org/}.

\bibitem[Ste22]{S22}
Ernst Steinitz.
\newblock {Polyeder und Raumeinteilungen}.
\newblock In Franz Meyer and Hans Mohrmann, editors, {\em Encyclopädie der
  Mathematischen Wissenschaften}, volume 3, Geometrie, erster Teil, zweite
  Hälfte, pages 1--139. Teubner, Leipzig, 1922.

\bibitem[Tut80]{T80}
William~T. Tutte.
\newblock On the enumeration of convex polyhedra.
\newblock {\em Journal of Combinatorial Theory, Series B}, 28(2):105--126,
  1980.

\bibitem[Zie95]{Z07}
Günter~M. Ziegler.
\newblock {\em Lectures on Polytopes}.
\newblock Number 152 in Graduate Texts in Mathematics. Springer, 1995.
\newblock updated seventh printing 2007.

\end{thebibliography}
 \bibliographystyle{alpha}
\end{document}